\newtheorem{theorem}{Theorem}[section]
\newtheorem{claim}{Claim}
\newtheorem{subclaim}{Subclaim}[claim]
\newtheorem{observation}{Observation}
\title{Odd Induced Subgraphs in Graphs of Maximum Degree Four}
\author{Jiangdong Ai\thanks{School of Mathematical Sciences and LPMC, Nankai University. {\tt jd@nankai.edu.cn}.}
\hspace{2mm}
Qiwen Guo\thanks{Department of Computer Science, Royal Holloway University of London, {\tt gqwmath@163.com}.}
\hspace{2mm}
Gregory Gutin\thanks{Department of Computer Science, Royal Holloway University of London, {\tt g.gutin@rhul.ac.uk}, and School of Mathematical Sciences and LPMC, Nankai University.}
\hspace{2mm} Yiming Hao\thanks{School of Mathematical Sciences and LPMC, Nankai University. {\tt  
1120230031@mail.nankai.edu.cn}.}
\hspace{2mm} Anders Yeo\thanks{Department of Mathematics and Computer Science, University of Southern Denmark. {\tt yeo@imada.sdu.dk}, and Department of Mathematics, University of Johannesburg.}
}
\begin{document}
\maketitle

\begin{abstract}

A graph is called {\em odd} if all of its vertex degrees are odd. A long-standing conjecture asked whether there exists a positive constant
$c$ such that every $n$-vertex graph without isolated vertices contains an odd induced subgraph on at least $cn$ vertices. In 2022, Ferber and Krivelevich resolved this conjecture affirmatively with $c=10^{-4}.$
A natural question is to determine the largest possible constant $c.$ In 1994, Caro remarked that if $2/7$ is a valid value for $c$, then it is the largest possible one. To the best of our knowledge, the bound $c\ge 2/7$ has not been improved. 
Previous research has established tight bounds for specific graph classes—for instance, 
$c = 2/5$ for graphs with maximum degree at most $3$ and without isolated vertices.
In this paper, we prove that $c=2/7$ is the tight bound for graphs with maximum degree at most $4$ and without isolated vertices. Our result provides some support for $2/7$ being the largest value of $c$.

\end{abstract}
\noindent\textbf{Keywords:}  odd induced subgraphs, maximum degree, extremal graph theory.


\section{Introduction}
A graph is called {\em even} (respectively, {\em odd}) if every vertex has even (respectively, odd) degree.
By a classical theorem of Gallai, the vertex set of every graph can be partitioned into two parts, each inducing an even subgraph.
In particular, every $n$-vertex graph has an induced even subgraph on at least 
$n/2$ vertices. No such statement holds for odd induced subgraphs 
in general, due to the possible presence of isolated vertices.

A long-standing conjecture~\cite{Caro1994,Scott1992} stated that there is a constant $c>0$ such that every $n$-vertex graph without isolated vertices has an odd induced subgraph with at least $cn$ vertices. This conjecture attracted considerable attention. Caro~\cite{Caro1994} proved that every $n$-vertex graph without isolated vertices has an odd induced subgraph on at least $\Omega(\sqrt{n})$ vertices. This result was improved to $\Omega(n / \log n)$ by Scott~\cite{Scott1992}.
Recently, Ferber and Krivelevich~\cite{FK} confirmed the conjecture, proving that one can take $c = 10^{-4}$. This raises a natural question: what is the largest value of $c?$ (Formally, if $f_o(G)$ denotes the size of a largest induced odd subgraph in $G$, then we can define $c$ as the infimum of $f_o(G)/|V(G)|$ over all graphs $G$ without isolated vertices.)
Caro \cite{Caro1994} mentioned, as a known fact, that $c\le \frac{2}{7}.$ 
Are there $n$-vertex graphs without isolated vertices in which all odd induced subgraphs have less than $2n/7$ vertices? Theorem \ref{thm: Delta=4} gives some support for the negative answer.

For several graph classes, the optimal constant 
$c$ is already known. Radcliffe and Scott ~\cite{Scott1995} proved that every $n$-vertex tree has an odd induced subgraph of order at least $2\lfloor (n+1)/3 \rfloor$. 
Berman et~al.~\cite{BWW1997} established the optimal value $c=2/5$
for graphs of maximum degree 3, and Hou et al.~\cite{hou2018} showed that the same holds for graphs of treewidth at most two.
 Rao et~al.~\cite{Rao2022} proved that $c\geq \frac{2}{5}$ for planar graphs with girth at
least 7, and $c\geq \frac{1}{3}$ for planar graphs with girth at least 6, and that both bounds are tight.  

In this paper, we extend the degree-bound result of Berman et al.~\cite{BWW1997}. We determine the maximum constant $c$ such that every graph with maximum degree at most 4 and no isolated vertices contains an odd induced subgraph on at least $cn$ vertices.

\begin{theorem}\label{thm: Delta=4}
  Every graph of order $n$ without isolated vertices and with maximum degree at most four has an odd induced subgraph of order at least $2n/7$. Furthermore, this bound is sharp.
\end{theorem}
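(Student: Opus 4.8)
The plan is to prove the bound by induction on $n$, using a discharging/reduction scheme in which we repeatedly delete a bounded-size piece of the graph, apply induction to what remains, and then extend the odd induced subgraph obtained back over the deleted piece. Concretely, I would set up a minimal counterexample $G$ on $n$ vertices: connected (since the bound is additive over components, a disconnected counterexample yields a connected one), with $\Delta(G)\le 4$, no isolated vertices, and $f_o(G)<2n/7$. The goal is to derive a contradiction by finding a "reducible configuration": a vertex subset $S$ with $|S|=k$ such that $G-S$ has no isolated vertices (or whose isolated vertices can be absorbed) and such that every odd induced subgraph of $G-S$ can be extended to an odd induced subgraph of $G$ gaining at least $\lceil 2k/7\rceil$ vertices. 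Since $2/7$ is the target ratio, the arithmetic of "how many vertices we must add back per deleted vertex" is tight, so the configurations have to be chosen carefully.

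The key steps, in order, would be: (i) handle low-degree vertices — if $G$ has a vertex of degree $1$, its neighbor has degree $\ge 2$ (else the whole component is an edge, $f_o=2=2n/7$ with $n=2$... wait, that gives exactly the bound, not a counterexample); more generally, analyze a vertex $v$ of degree $1$ together with its neighbor $u$ and $N(u)$, delete a suitable set around $u$, and show we can always put back enough. (ii) Rule out small separators and short cycles of low-degree vertices: a path or cycle of degree-$2$ vertices can be contracted/rerouted, so we may assume degree-$2$ vertices are "isolated" in an appropriate auxiliary sense. (iii) Reduce to the case that $G$ is essentially $4$-regular, or $4$-regular with a few degree-$2$ and degree-$3$ vertices, and then use a global parity/counting argument: choose a random balanced bipartition-type construction, or use the Gallai-type partition into two even-inducing parts and perturb it to fix parities, tracking that the number of vertices whose parity must be corrected is small relative to $n$. (iv) Finally, verify sharpness by exhibiting an explicit family of graphs on $7k$ vertices with maximum degree $4$, no isolated vertices, in which the largest odd induced subgraph has exactly $2k$ vertices — presumably built from $k$ disjoint copies of a fixed $7$-vertex gadget of max degree $4$ (e.g. some small graph, perhaps related to $K_5$ minus a perfect matching plus pendant structure, or a blow-up construction), together with a short argument (parity plus degree constraints) that no odd induced subgraph of the gadget exceeds $2$ vertices.

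The main obstacle I expect is step (iii)–(iv): controlling the parity corrections globally when the graph is (near-)$4$-regular. For max degree $3$ the analogous result (Berman et al., $c=2/5$) works because degree-$3$ vertices have very restricted local structure; at max degree $4$ the neighborhoods are richer, and a naive local reduction loses too much — one deleted vertex of degree $4$ can force adding back only $1$ vertex, which is below the $2/7$ rate. So the real work is identifying the full list of unavoidable reducible configurations (a finite case analysis on the structure of $N(v)$ and $N(N(v))$ for a carefully chosen $v$), showing their absence forces enough regularity/girth that a direct construction of a large odd induced subgraph succeeds, and simultaneously ensuring the sharpness gadget genuinely meets every configuration's non-reducibility. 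I would also be careful that deleting $S$ does not create isolated vertices that cannot be charged; this typically forces $S$ to be chosen as a union of a vertex with part of its second neighborhood, and the bookkeeping there (ensuring $|S|\le 7$ or so and the gain is $\ge \lceil 2|S|/7\rceil$) is where the constant $2/7$ is pinned down.
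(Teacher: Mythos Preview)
Your overall framework --- take a minimum counterexample, peel off low-degree vertices by deleting a bounded neighborhood $S$ and extending an inductive odd subgraph back over $S$, while carefully accounting for isolated vertices created in $G-S$ --- is exactly the scaffolding the paper uses. In particular your steps (i)--(ii) correspond closely to the paper's Claims 2.1--2.4, which successively force $\delta(G)\ge 2$, then $\delta(G)\ge 3$, then triangle-freeness, then $4$-regularity, each time by deleting a set of the form $N[x]$ or $N(p)\cup N(x_1)$ (size $\le 7$ or so), and arguing that the induced odd subgraph of the remainder can be augmented by $\ge 2$ vertices (an edge, or a small star centered at a vertex with two neighbors in $I$).

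The genuine gap is your step (iii). You propose that once $G$ is (near-)$4$-regular one should switch to a \emph{global} argument --- a random bipartition, or perturbing a Gallai even/even partition. The paper does nothing of the sort, and it is hard to see how such a global scheme would pin down the exact constant $2/7$; those methods typically lose a constant factor. Instead the paper stays entirely local: in the $4$-regular triangle-free case it deletes the larger set $W=N(p)\cup N(x_1)\cup N(x_2)\cup N(x_3)$ (up to $14$ vertices), inducts on the remainder, and adds back the star $\{px_1,px_2,px_3\}$ plus possibly more edges into the isolated set $I$. The bulk of the work is a fine case analysis on $|W|$, $|I|$, and how the neighbors of $p$ meet $I$, along the way ruling out pairs of $4$-cycles sharing an edge, and then single $4$-cycles, until a final contradiction. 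This is the part your outline does not supply, and your suggested substitutes would not deliver it.

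For sharpness, your guess (a gadget built from $K_5$ minus a matching with pendants) is off. The paper's extremal example is $K_7$ minus a Hamiltonian cycle: it is $4$-regular on $7$ vertices, and a short parity check shows every odd induced subgraph has order exactly $2$.
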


A related line of research concerns a conjecture of Scott~\cite{Scott1992} who proposed that every $n$-vertex graph with chromatic 
number $\chi$ and no isolated vertices contains an odd induced subgraph with at least $n/(2\chi)$ vertices. Recent work of Wang and Wu~\cite{T2024} disproved this conjecture for bipartite graphs, while confirming it for all line graphs; the conjecture remains open for graphs with $\chi\ge 3.$

In the next section, we prove Theorem \ref{thm: Delta=4}. We conclude the paper in Section \ref{sec:conc}.

\section{Proof of Theorem \ref{thm: Delta=4}}

Assume that the theorem is false and let $G$ be a counterexample to the theorem of the smallest possible order. Note that $G$ is connected. We will prove several claims concerning $G$, ending in a contradiction, which will complete the proof. Let $G$ have order $n$, $\Delta(G)\leq 4$, and have no isolated vertices. Note that when $n\le 7$, every edge forms an odd induced subgraph of order at least $2n\slash 7$, so we assume $n>7$.
    

    \begin{claim}\label{claim: delta(G)=1}
    $\delta(G)>1$.   
        \end{claim}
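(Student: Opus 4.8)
The plan is to argue by contradiction using the minimality of $G$. Suppose $v$ is a vertex of degree $1$ in $G$, with unique neighbor $u$. The idea is to delete $v$ and possibly a small neighborhood around it, apply the induction hypothesis to a smaller graph, and then extend the odd induced subgraph we obtain back to $G$ while keeping all degrees odd and losing at most $2/7$ of the extra vertices we removed. First I would consider $G' = G - \{u, v\}$. Each vertex in $N(u) \setminus \{v\}$ loses exactly one neighbor, so $G'$ may now have isolated vertices; let $I$ be the set of such vertices (these were vertices whose only neighbor was $u$). Every vertex in $I$ has degree $1$ in $G$, with $u$ its unique neighbor.

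Next I would handle the isolated vertices of $G'$. Since $\Delta(G) \le 4$, we have $|N(u)| \le 4$, so $|I| \le 3$. Set $G'' = G' - I$, a graph on $n - 2 - |I|$ vertices with maximum degree at most $4$; I need to check $G''$ has no isolated vertices (a vertex isolated in $G''$ but not in $G'$ would have had all its neighbors in $I \cup \{u\}$, i.e. among the leaves attached to $u$ and $u$ itself — but leaves in $I$ have degree $1$ with neighbor $u$, so such a vertex would have to be adjacent only to $u$, hence lie in $I$, a contradiction; care is needed if $G''$ is empty, but since $n > 7$ this will not happen, or can be dispatched directly). Applying minimality to $G''$ yields an odd induced subgraph $H''$ on at least $2(n - 2 - |I|)/7$ vertices. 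Now I would extend $H''$ to an odd induced subgraph of $G$: the key move is to add $u$ and $v$ together (the edge $uv$), adjusting for the parity contribution of $u$ to vertices of $H''$ in $N(u)$. If adding $u$ makes some vertices of $H''$ even, I instead work with the set $\{u, v\}$ plus possibly re-including one or two leaves from $I$ to fix parities, or I use the flexibility that $v$ and the leaves in $I$ are themselves available to be added as pendant vertices making $u$'s degree whatever parity is needed.

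The cleanest version of the extension step: let $W = V(H'')$ and let $a = |N(u) \cap W|$. I want to choose a subset $S \subseteq \{v\} \cup I$ (all of which are leaves at $u$) of size $s$ and decide whether to include $u$, so that in $G[W \cup S \cup \{u\}]$ every vertex is odd. Including $u$ adds $1$ to the degree of each of the $a$ vertices in $N(u) \cap W$ — this is the obstacle, since if $a$ is odd those vertices become even. To repair this I would instead \emph{not} have included those problematic vertices in the first place: more precisely, I should run the induction not on $G''$ but arrange things so that $u$ can always be attached. A robust way is to case on the parity of $a$ after the fact and, when $a$ is odd, delete from $H''$ one vertex of $N(u) \cap W$ — but that loses a vertex, so I instead prefer to set up $G''$ so that $u$'s eventual neighbors in the induced subgraph have controllable parity. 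Concretely, I would likely split into cases according to $|N(u)|$ (it is $2$, $3$, or $4$, since $\delta(G) = 1$ is being contradicted and $u \ne v$ has degree $\ge 2$) and according to how many of $u$'s neighbors are leaves, and in each case exhibit the extension explicitly, using that $|I| + 1 \le |N(u)|$ pendant vertices at $u$ give us enough parity freedom at $u$ while the loss of the at most $|I| + 2 \le |N(u)| + 1 \le 5$ deleted vertices is compensated because $|V(H)| \ge 2(n - |N(u)| - 1)/7 + (\text{at least } 2)$ and $2 \ge 2(|N(u)|+1)/7$ exactly when $|N(u)| \le 6$.

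The main obstacle I anticipate is the parity bookkeeping in the extension step: adding $u$ flips the parity of all its surviving neighbors in $H''$, and there is no leaf adjacent to those neighbors to flip them back, so one must either ensure $|N(u) \cap V(H'')|$ is even or absorb the cost of dropping one such vertex. Getting the counting to still yield $2n/7$ in the worst case — $|N(u)| = 4$ with, say, no other leaves, forcing deletion of $u$, $v$, and nothing else but possibly one neighbor — is the delicate part, and I expect the proof will lean on the slack in the inequality $2(n-2)/7 \ge 2n/7 - 4/7$ together with the two "free" vertices $u, v$ (or $u$ and a leaf) that get re-added, noting $2 - 4/7 > 2n/7 - 2(n)/7$ comfortably covers a constant-size correction.
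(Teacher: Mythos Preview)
Your setup is fine through the definition of $G''$, but the extension step has a genuine gap that you yourself flag and do not close. When you re-insert $u$, every vertex of $N(u)\cap V(H'')$ has its parity flipped from odd to even; since the only other vertices you can add are leaves $\{v\}\cup I$ pendant at $u$, none of them is adjacent to those spoiled vertices, so there is no way to repair them without deleting them from $H''$. Your ``drop one such vertex'' patch only recovers $|H''|+1$ vertices, and the arithmetic $2(n-2-|I|)/7 + 1 \ge 2n/7$ forces $|I|\le 1$; moreover if $a=|N(u)\cap V(H'')|\ge 2$ you would need to drop at least two vertices, and the deficit grows. The parity of $a$ is irrelevant here: the obstruction is that \emph{every} vertex in $N(u)\cap V(H'')$ becomes even, not just an odd number of them. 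Nothing in your sketch controls $a$, and the counting slack you cite at the end (``$2\ge 2(|N(u)|+1)/7$'') presumes you can always re-add two vertices cleanly, which is exactly what fails.

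The paper's fix is to delete the entire closed neighbourhood $N_G[x]$ of the leaf's neighbour (your $u$), not just $\{u,v\}$. Then no vertex of the smaller graph is adjacent to $x$, so the edge $px$ can always be re-attached without disturbing any parities: the analogue of your $a$ is forced to be $0$. The price is that up to five vertices are removed and up to nine new isolated vertices can appear (each $y_i\in N(x)\setminus\{p\}$ may strand up to three of its own neighbours), so the simple inequality $|N_G[x]\cup I|\le 7$ need not hold. The paper then shows $|N_G[x]\cup I|\ge 8$ forces $d_G(x)\ge 3$ and $|I|\ge 3$, and works through the resulting cases ($d_G(x)=3$ with $|I|=4$; $d_G(x)=4$ with $|I|\in\{3,4,5,6\}$), in each case locating an alternative odd configuration (such as a star $\{x,y_i,z_1,z_2\}$ or a larger tree on $N_G[x]$ and part of $I$) large enough to compensate. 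Your instinct to case on $d(u)$ and the leaf structure is right, but it has to be carried out after removing all of $N_G[u]$, not just $\{u,v\}$.
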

\begin{proof}
    For the sake of contradiction, assume that $\delta(G)=1$.
    Let $p\in V(G)$ be a vertex of degree one, and $N_G(p)=\{x\}$. Let  $G'=G- N_G[x]$ and $N_G(x)=\{p,y_1,\dots,y_{d_G(x)-1}\}$. 
    Let $I$ be the set of isolated vertices in $G'$ and let $G''=G'- I$. Then $G''$ contains an odd induced subgraph $H$ of order at least $2|V(G'')|\slash 7$ by the minimality of $n$. 
    
    Note that $|I\cup N_G[x]|\ge 8$, since otherwise $H$ together with the edge $px$ is an odd induced subgraph of order at least $2|V(G)|\slash 7$, which is a contradiction. Therefore, $d_G(x)\ge 3$ and $9\ge |I|\geq 3$.

     Note that  every $y_i$ which has two neighbors in $I$, must have one neighbor in $V(G'')$, since otherwise suppose that $\{z_1,z_2\}\subseteq N_G(y_1)\cap I$ and $y_1$ has no neighbors in $V(G'')$, and $H$ together with $G[\{x,y_1,z_1,z_2\} ]$ is an odd induced subgraph of order at least $2|V(G)|\slash 7$, which is a contradiction.

    If $d_G(x)= 3$, then  $|I|=4$, $y_1$ is not adjacent to $y_2$ and $|N_G(y_1)\cap I|=|N_G(y_1)\cap I|=2$. Let $N_G(y_1)\cap I=\{z_1,z_2\}$. Let $G^*=G- \{p,x,y_2\}-I$. Then $G^*$ contains an odd induced subgraph $H^*$ of order at least $2|V(G^*)|\slash 7 = 2(|V(G)|-7)\slash 7$ and $H^*$ together with the edge $px$ if $y_1 \notin V(H^*)$ or $H^*$ together with the edge set $\{y_1z_1,y_1z_2\}$ if $y_1\in V(H^*)$ is an odd induced subgraph of order at least $2|V(G)|\slash 7$, which is a contradiction. 

    Now we have that $d_G(x)=4$ and $3 \le |I| \le 6$.

     \noindent\textbf{Case 1:} $5\le |I|\le 6$.
     
     Without loss of generality, assume $|N_G(y_1)\cap I|=|N_G(y_2)\cap I|= 2$ and $(N_G(y_1)\cap I) \cap (N_G(y_2)\cap I)= \emptyset$. Then $y_1$ is not adjacent to $y_2$. Let $W=N_G(x)\cup N_G(y_1) \cup N_G(y_2)$ and let $G^*$ be the graph obtained from $G$ by deleting $W$ and all isolated vertices in the resulting graph. Then $G^*$ contains an odd induced subgraph $H^*$ of order at least $2|V(G^*)|\slash 7 \ge 2(|V(G)|-19)\slash 7$. Let $N_G(y_1)\cap I=\{z_1,z_2\}$ and $N_G(y_2)\cap I= \{z_3,z_4\}$. Then $H^*$ together with $G[\{p,x,y_1,y_2,z_1,z_2,z_3,z_4\}]$ is an odd induced subgraph of order at least $2|V(G)|\slash 7$, which is a contradiction. 

      \noindent\textbf{Case 2:} $3 \le |I|\le 4$.
      
    If $|N_G(y_{i_0})\cap I|= 2$ for some $i_0\in \{1,2,3\}$, then let $W=N_G(x)\cup N_G(y_{i_0})$ and let $G^*$ be the graph obtained from $G$ by deleting $W$ and all isolated vertices in the resulting graph. Then $G^*$ contains an odd induced subgraph $H^*$ of order at least $2|V(G^*)|\slash 7 \ge 2(|V(G)|-13)\slash 7$. Let $N_G(y_{i_0})\cap I=\{z_1,z_2\}$. $H^*$ together with $G[\{x,y_{i_0},z_1,z_2\}]$ is an odd induced subgraph of order at least $2|V(G)|\slash 7$, which is a contradiction. 

    Thus $|N_G(y_1)\cap I|=|N_G(y_2)\cap I|=|N_G(y_3)\cap I|= 1$ and $|I|=3$. Let $N_G(y_i)\cap I=\{z_i\}$ for $i\in \{1,2,3\}$.
    
    If each $y_i$ has no neighbors in $V(G'')$, then $V(G)=N_G[x]\cup I$ and $G[\{x,y_1,y_2,y_3\}]$ (if $y_i$ is not adjacent to $y_j$ for every $i,j\in \{1,2,3\}$), or $ G[\{p,x,y_{i_1},y_{i_2},z_{i_1},z_{i_2}\}]$ (if $y_{i_1}$ is adjacent to $y_{i_2}$ for some $i_1,i_2\in \{1,2,3\}$) is an odd induced subgraph of order at least $2|V(G)|\slash 7$, which is a contradiction. 
   
    So, without loss of generality, assume that $y_1$ has a neighbor in $V(G'')$. Let $G^*=G- \{p,x,y_2,y_3\} - I$. Then $G^*$ has no isolated vertices and it contains an odd induced subgraph $H^*$ of order at least $2|V(G^*)|\slash 7 = 2(|V(G)|-7)\slash 7$. Then $H^*$ together with the edge $px$ if $y_1\notin V(H^*)$ or $H^*$ together with the edge set $\{xy_1,y_1z_1\}$ if $y_1\in V(H^*)$ is an odd induced subgraph of order at least $2|V(G)|\slash 7$, which is a contradiction.     
\end{proof}

\begin{claim}\label{claim: delta(G)=2}
    $\delta(G)>2.$ 
\end{claim}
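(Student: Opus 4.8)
The plan is to mimic the structure of the proof of Claim~\ref{claim: delta(G)=1}: assume for contradiction that $G$ has a vertex $p$ with $d_G(p)=2$, say $N_G(p)=\{x_1,x_2\}$, delete a suitable ball around $p$, apply minimality of $n$ to the resulting graph after stripping isolated vertices, and then glue a small odd gadget containing $p$ back on. The natural first move is to delete $N_G[p]\cup N_G[x_1]\cup N_G[x_2]$ (or just $N_G[x_1]$ if $x_1x_2\in E(G)$, so that $px_1x_2$ can be made odd as a triangle, or $N_G[p]$ together with one of the $x_i$'s), form $G''$ by removing the resulting isolated vertex set $I$, obtain from minimality an odd induced subgraph $H$ of $G''$ of order at least $2|V(G'')|/7$, and observe that if the total number of deleted vertices is at most $7$ then $H$ plus the edge $px_1$ (an odd induced subgraph, since we deleted all of $N_G[x_1]$, so $p$ has no other neighbors in $H$) contradicts minimality. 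Hence $|I\cup(\text{deleted set})|\ge 8$, which forces $I$ to be large and forces many of the vertices $x_1,x_2$ and their neighbors to send edges into $I$.

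From there I would run a case analysis on $\deg_G(x_1),\deg_G(x_2)$ and on how the second neighborhood meets $I$, exactly as in Claim~\ref{claim: delta(G)=1}. The key mechanism is: whenever a deleted vertex $w$ (one of the $x_i$ or one of their neighbors $y$) has two neighbors $z_1,z_2$ lying in $I$, the set $G[\{w,z_1,z_2,\dots\}]$ together with some already-present structure forms an odd induced subgraph (a path on three vertices is odd at its ends but its middle vertex has degree $2$, so one actually uses the star $K_{1,2}$ centered at $w$ only after checking $w$'s other neighbors are excluded — more precisely one uses $w$ adjacent to exactly $z_1,z_2$ within the chosen induced set, giving $w$ degree $2$, which is even, so instead one wants $w$ adjacent to an odd number of chosen vertices). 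So in each branch I pick the gadget so that every retained vertex outside $H$ has odd degree within the gadget and no edges to $H$; since $H$ already has all degrees odd and we only add vertices with no $H$-neighbors, the union stays odd. Counting: if the gadget has $k$ vertices and we deleted $k+|I|-(\text{junk}) $ vertices, we need $k\ge \frac{2}{7}(\text{\# deleted})$, i.e. the gadget must "pay" for the deleted ball at rate $2/7$; this is exactly why $|I|$ being forced large (because each $x_i$ has degree $\le 4$, so $|N_G[x_1]\cup N_G[x_2]|\le 9$ only, but we need $\ge 8$ deleted) is consumable.

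Concretely I expect roughly these branches. If $x_1x_2\in E(G)$: delete $N_G[x_1]\cup N_G[x_2]$; the triangle $px_1x_2$ is odd (each vertex has degree $2$ — wait, degree $2$ is even), so actually use $G[\{p,x_1\}]$ or note $p,x_1,x_2$ with the edges $px_1,px_2,x_1x_2$ gives $p$ degree $2$; hence instead delete $N_G[x_1]$ only and keep the edge $px_2$... In fact the cleanest is always to keep a single edge incident to $p$, say $px_1$, after deleting $N_G[x_1]$, so I would delete $N_G[p]\cup N_G[x_1]$, giving at most $2+ (1+4) = 7$ deleted unless $|I|\ge 3$; then the edge $px_1$... no, $x_1$ is deleted. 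The correct gadget keeping $p$ alive after deleting $N_G[x_1]$ is the edge $px_2$ only if $x_2\notin N_G[x_1]$, i.e. $x_1x_2\notin E$; and if $x_1x_2\in E$ one instead deletes $N_G[x_1]$ and keeps nothing at $p$, handling $p$ inside a larger gadget like $G[\{p,x_2,z,\dots\}]$. So the two top-level cases are $x_1x_2\in E(G)$ and $x_1x_2\notin E(G)$, each subdivided by $\deg_G(x_i)\in\{2,3,4\}$ and by $|I|$ and by the adjacency pattern among $\{x_1,x_2\}$ and their neighbors $\{y_j\}$ into $I$, in direct parallel with the three cases and their subcases in the proof of Claim~\ref{claim: delta(G)=1}.

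The main obstacle I anticipate is bookkeeping, not ideas: since $\delta(G)\ge 2$ (Claim~\ref{claim: delta(G)=1}) the vertices in $I$ already have a second neighbor outside the deleted ball except where that second neighbor is another $x_i$ or $y_j$, so one must carefully track which vertices of $I$ are "captured" by a gadget and which remain isolated in $G''$ versus which are genuinely new isolated vertices created by deletion, to get the count $|V(G'')|$ right and to ensure the gadget plus $H$ is genuinely \emph{induced} (no stray edges between gadget and $H$). The delicate subcases will be the ones with $\deg_G(x_1)=\deg_G(x_2)=4$ and small $|I|$ (say $|I|\in\{1,2\}$), where there may not be enough room in $I$ to build an odd gadget covering all of $p,x_1,x_2$ and their $I$-neighbors; there one likely has to exploit adjacencies among the $y_j$'s or argue that such a configuration already has $V(G)=N_G[x_1]\cup N_G[x_2]\cup I$ and handle the small graph directly. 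I would structure the write-up to dispatch the "large $|I|$" cases quickly by the rate-$2/7$ payment argument and spend the bulk of the effort on the finitely many tight small-$|I|$ configurations.
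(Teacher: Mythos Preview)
Your proposal is a strategy sketch, not a proof, and it contains unresolved confusions that would block execution. Two concrete gaps:

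\textbf{(1) The gadget mechanism.} You write ``then the edge $px_1$\dots\ no, $x_1$ is deleted,'' and you wobble between deleting $N_G[p]\cup N_G[x_1]\cup N_G[x_2]$, only $N_G[x_1]$, etc. The point you are missing is that ``deleting'' a set $W$ serves only to guarantee that the odd subgraph $H$ obtained in $G-W-I$ contains no neighbor of the vertices you want to glue back. So you may (and the paper does) delete $W=N_G(p)\cup N_G(x_1)$, obtain $H$ in $G'-I$, and then \emph{add back} the edge $px_1$: both $p$ and $x_1$ are in $W$, but $H\cup\{p,x_1\}$ is still an induced subgraph of $G$, and it is odd because every neighbor of $p$ or $x_1$ other than each other lies in $W\setminus\{p,x_1\}\subseteq V(G)\setminus V(H)$. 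There is no need to split on whether $x_1x_2\in E(G)$: if it is, $|W|$ is only smaller. Your attempted triangle gadget $px_1x_2$ fails (all degrees even), as you noticed, but this branch is simply unnecessary once the mechanism above is clear.

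\textbf{(2) The missing structural reduction.} Your plan is a raw case split on $\deg_G(x_1),\deg_G(x_2)$ and on ``how the second neighborhood meets $I$''. The paper avoids this explosion via a single subclaim: \emph{every vertex of $W$ has at most one neighbor in $I$}. This is proved by the very gadget idea you gesture at (if some $u\in W$ has two neighbors $z_1,z_2\in I$, use $\{x_1u,uz_1,uz_2\}$ as a four-vertex odd star, after clearing $u$'s remaining neighbor if necessary). With $\delta(G)\ge 2$ every vertex of $I$ has at least two neighbors in $W$, so $2|I|\le |W\setminus\{p,x_1\}|\le 4$, whence $|I|\le 2$; combined with $|W\cup I|\ge 8$ this forces $|W|=6$, $|I|=2$, and the remaining configuration is essentially unique. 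Without this subclaim your ``large $|I|$'' and ``small $|I|$'' cases proliferate, and your proposal gives no evidence that the tight small-$|I|$ configurations you anticipate can all be dispatched.

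In short: the overall shape (delete a ball, apply minimality, glue an odd edge or star) is correct and matches the paper, but you have not pinned down the right deletion set, you have not isolated the one structural lemma that collapses the case analysis, and the written confusions about the gluing step would need to be resolved before any of the cases can actually be verified.
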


\begin{proof}
     For the sake of contradiction, assume that $\delta(G)=2$.

    Let $p\in V(G)$ such that $N_G(p)=\{x_1,x_2\}$. Let $W=N_G(p)\cup N_G(x_1) $. Then $|W|\le 6$. Let $G'=G-W$ and let $I$ be the set of isolated vertices in $G'$. Since $\delta(G)=2$, $|I|\le 6$. If $|W\cup I|\le 7$, then the odd induced subgraph of order at least $2(|V(G)|-7)\slash 7$ in $G'-I$, together with the edge $px_1$ is an odd induced subgraph of order at least $2|V(G)|\slash 7$, which is a contradiction. Thus, $|W\cup I|\ge 8$ and $2\le |I|\le 6$.

    \begin{subclaim}\label{claim: no tri and > 2 iv}
    Each vertex in $W$ has at most one neighbor in $I$.
    \end{subclaim}

    \begin{proof}
        
   Suppose that $u\in W$ has two neighbors in $I$.
   
   Note that $N_G(u)\setminus (W\cup I)\ne \emptyset$, since otherwise, let $\{z_1,z_2\} \subseteq N_G(u)\cap I$, and without loss of generality, assume that $u \in N_G(x_1)\setminus \{p\}$, and the odd induced subgraph of order at least $2(|V(G)|-12)\slash 7$ in $G'-I$, together with the edge set $\{x_1u,uz_1,uz_2\}$ is an odd induced subgraph of order at least $2|V(G)|\slash 7$, which is a contradiction. 
    
    Then every vertex in $W$ which has two neighbors in $I$, has one neighbor outside $W\cup I$. Then $|I|\le 4$, since $\delta(G)=2$. Without loss of generality, assume that $u \in N_G(x_1)\setminus \{p\}$ and $ N_G(u)\cap I=\{z_1,z_2\}$ and $N_G(u)\setminus (W\cup I) = \{k\}$. Then the odd induced subgraph of order at least $2(|V(G)|-14)\slash 7$ in the graph obtained from $G'-I-\{k\}$ by deleting all isolated vertices, together with the edge set $\{x_1u,uz_1,uz_2\}$ is an odd induced subgraph of order at least $2|V(G)|\slash 7$, which is a contradiction. 
     \end{proof}
    
    Subclaim \ref{claim: no tri and > 2 iv} implies that $|I|=2$ and $|W|=6$. Then each vertex in $W\setminus \{p,x_1\}$ has exactly one neighbor in $I$. Let $N_G(x_1)=\{p,y_1,y_2,y_3\}$ and $I=\{z_1,z_2\}$. Without loss of generality, let $N_G(z_1)=\{y_1,y_2\}$ and $N_G(z_2)=\{y_3,x_2\}$. If each vertex in $\{x_2,y_1,y_2,y_3\}$ has no neighbors outside $W \cup I$, then $V(G)=W \cup I$ and $G[\{p,x_1,y_1,y_2\}]$, or $G[\{p,x_1,y_1,y_3\}]$, or $G[\{p,x_1,y_2,y_3\}]$, or $G[y_1,z_1,x_2,z_2]$  is an odd induced subgraph of order at least $2|V(G)|\slash 7$, which is a contradiction. Without loss of generality, suppose that $N_G(y_1)\setminus (W \cup I)\ne \emptyset$. Let $W^*=W\setminus \{y_1\} $ and $G^*=G-W^*-I$. Then the odd induced subgraph $H^*$ of order at least $2(|V(G)|-7)\slash 7$ in $G^*$, together with the edge $px_1$ (if $y_1\notin V(H^*)$) or together with the edge set $\{x_1y_1,y_1z_1\}$ (if $y_1\in V(H^*)$), is an odd induced subgraph of order at least $2|V(G)|\slash 7$, which is a contradiction.
\end{proof}


 \begin{claim}
    $G$ contains no triangles.
 \end{claim}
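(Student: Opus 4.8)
The plan is to follow the same local-replacement strategy used in Claims~\ref{claim: delta(G)=1} and \ref{claim: delta(G)=2}: assume $G$ contains a triangle, pick a vertex on it, delete a small ball around that vertex, apply minimality to the remainder, and then stitch a small odd gadget back in to contradict the counterexample property. By Claims~\ref{claim: delta(G)=1} and \ref{claim: delta(G)=2} we already know $\delta(G)\ge 3$, so every vertex has degree $3$ or $4$; this is the structural information I will lean on throughout. Let $T=xyz$ be a triangle. The key observation is that $G[\{x,y,z\}]$ is itself an odd induced subgraph on $3$ vertices (the triangle $K_3$), so any vertex-deletion that removes the ``boundary'' of $T$ and leaves the triangle available as a gadget costs at most a bounded number of vertices, and a triangle contributes $3>2\cdot 7 / 7$ worth of budget.

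First I would set $W=N_G[x]\cup N_G[y]\cup N_G[z]$, the closed neighborhood of the triangle; since each of $x,y,z$ has degree at most $4$ and they are mutually adjacent, $|W|\le 3+3\cdot 2=9$ (each of $x,y,z$ has at most two neighbors off the triangle). Let $G'=G-W$, let $I$ be the isolated vertices of $G'$, and let $G''=G'-I$; by minimality $G''$ has an odd induced subgraph $H$ on at least $2|V(G'')|/7$ vertices, and since $\delta(G)\ge 3$ each vertex of $I$ sends all $\ge 3$ of its edges into $W\setminus\{x,y,z\}$, so $|I|\le 3\cdot 6 / 1$ is crudely bounded, but more carefully, since $W\setminus\{x,y,z\}$ has at most $6$ vertices each of degree $\le 4$ with at least one edge used inside the triangle's neighborhood, $|I|$ is bounded by a small constant. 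If $|W\cup I|\le 7$ then $H$ together with the triangle $G[\{x,y,z\}]$ is an odd induced subgraph of order at least $2|V(G)|/7$, a contradiction; so $|W\cup I|\ge 8$. From here I would do a short case analysis, as in the earlier claims, on how large $I$ is and on how the vertices of $W\setminus\{x,y,z\}$ attach to $I$: if some vertex $u\in W\setminus\{x,y,z\}$ has two neighbors $z_1,z_2$ in $I$, then $u$ must have a neighbor outside $W\cup I$ (else $G[\{x,u,z_1,z_2\}]$ or a similar small gadget, where $x$ is the triangle vertex adjacent to $u$, plus $H$, gives the contradiction), which lets us delete a slightly larger ball and re-stitch $\{xu,uz_1,uz_2\}$ — here one must check that adding the path through $u$ makes exactly the vertices $x,u,z_1,z_2$ have odd degree, using the fact that these are newly added and not adjacent to $V(H)$.

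The routine but necessary part is verifying in each case that the stitched-together graph is genuinely an \emph{induced} odd subgraph: the gadget vertices (triangle vertices, the relevant $y_i$-type attachment vertices, and the $I$-vertices used) must have no edges to $V(H)$, which holds because $V(H)\subseteq V(G'')=V(G)\setminus(W\cup I)$ and the gadget lives inside $W\cup I$; and the degree parity inside the gadget must come out odd for every gadget vertex while the deletions do not disturb parities inside $H$. I expect the main obstacle to be bookkeeping the worst case where $|I|$ is as large as the degree bound allows and the attachments from $W\setminus\{x,y,z\}$ to $I$ are spread out so that no single vertex has two $I$-neighbors — then one needs $V(G)=W\cup I$ (otherwise delete a ball around a $W$-vertex with an outside neighbor and re-stitch, losing only $7$ vertices) and must exhibit, by hand, a small odd induced subgraph spanning at least $2|V(G)|/7$ of the at most $\sim 15$ vertices of $W\cup I$, which is a finite check but requires care because the triangle plus pendant-ish structure can be attached in several nonisomorphic ways. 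Once triangles are excluded, the girth of $G$ is at least $4$, which will presumably be exploited in the subsequent claims.
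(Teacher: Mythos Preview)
Your high-level plan---delete a ball around the triangle, apply minimality, stitch back a small odd gadget---is exactly the paper's strategy, but the paper makes a tighter choice of ball. Instead of $W = N_G[x] \cup N_G[y] \cup N_G[z]$ (size $\le 9$) with the triangle $K_3$ as the gadget, it takes a single triangle edge $ab$ and sets $W = N_G(a) \cup N_G(b)$ (size $\le 7$, since $a$ and $b$ already share the third triangle vertex $c$) with the edge $ab$ as the gadget. This keeps the isolated set small: each $I$-vertex sends at least $3$ edges into $W\setminus\{a,b\}$, which has only five vertices, so $|I|\le 4$. A short subclaim---no $W$-vertex has two $I$-neighbours, proved by exactly the ``$\{au,uz_1,uz_2\}$'' trick you outline---then forces $|I|=1$ and $|W|=7$. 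The paper is left with a single eight-vertex configuration, which it finishes in three quick cases according to which vertices of $W$ have a neighbour outside $W\cup I$.

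By contrast, your larger $W$ gives $|I|\le 6$ and $|W\cup I|\le 15$, so substantially more residual work. You also undersell your own gadget: with a size-$3$ triangle you get an immediate contradiction whenever $|W\cup I|\le 10$, not just $\le 7$; but that still leaves the range $11\le |W\cup I|\le 15$, and your sketch does not close it. The plan is not wrong in principle, but the ``finite check'' you defer is exactly where the content lives, and it is strictly more work than the paper's single residual case. The moral: two triangle vertices already buy you the reusable edge $ab$, so paying for the closed neighbourhood of the third only inflates the case analysis.
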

   \begin{proof}

    For the sake of contradiction, assume that $G$ contains a triangle.
    Let $ab$ be an edge of this triangle and $W=N_G(a)\cup N_G(b)$. Then $|W|\le 7$. Let $G'=G- W$. Let $I$ be the set of isolated vertices in $G'$. 
    
    Note that $|W\cup I|\ge 8$, since otherwise the odd induced subgraph of order at least $2(|V(G)|-7)\slash 7$ in $G'-I$, together with the edge $ab$ is an odd induced subgraph of order at least $2|V(G)|\slash 7$. Therefore, $1\le |I|\le 4$.

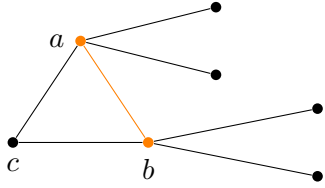
\begin{figure}[htbp]
 \centering   
\begin{tikzpicture}[node distance=1.5cm,scale=0.9]

   \draw (0,0) node[fill=orange,minimum size =4pt,circle,inner sep=1pt,label=west:$a$] (a) {};
    \draw (1,-1.5) node[fill=orange,minimum size =4pt,circle,inner sep=1pt,label=below:$b$] (b) {};
   \draw (-1,-1.5) node[fill=black,minimum size =4pt,circle,inner sep=1pt,label=below:$c$] (c) {};
    \draw (2,0.5) node[fill=black,minimum size =4pt,circle,inner sep=1pt] (y1) {};
    \draw (2,-0.5) node[fill=black,minimum size =4pt,circle,inner sep=1pt] (y2) {};
   \draw (3.5,-1) node[fill=black,minimum size =4pt,circle,inner sep=1pt] (y3) {};
    \draw (3.5,-2) node[fill=black,minimum size =4pt,circle,inner sep=1pt] (y4) {};
   
\draw (a)--(c);
\draw[orange] (a)--(b);
\draw (b)--(c);
\draw (a)--(y1);
\draw (a)--(y2);
\draw (b)--(y3);
\draw (b)--(y4);

\end{tikzpicture}

    \caption{The graph $G[W]$ when $G$ contains a triangle.}
\end{figure}

    \begin{subclaim}\label{subclaim main thm: 2 iv}
      Each vertex in $W$ has at most one neighbor in $I$.
    \end{subclaim}
    \begin{proof}
     Suppose that $u\in W$ has two neighbors in $I$.
     
    Note that $N_G(u)\setminus (W\cup I)\ne \emptyset$, since otherwise, let $\{z_1,z_2\} \subseteq N_G(u)\cap I$, and without loss of generality, assume that  $u \in N_G(a)\setminus \{b\}$ and the odd induced subgraph of order at least $2(|V(G)|-11)\slash 7$ in $G'-I$, together with the edge set $\{au,uz_1,uz_2\}$  is an odd induced subgraph of order at least $2|V(G)|\slash 7$, which is a contradiction.

  So, every vertex in $W$ which has two neighbors in $I$, has one neighbor outside $W\cup I$. Then $|I|\le 3$ since $\delta(G)\geq 3$. Without loss of generality, assume that $u \in N_G(a)\setminus \{b\}$ and $ N_G(u)\cap I=\{z_1,z_2\}$ and $N_G(u)\setminus (W\cup I) = \{k\}$. Then the odd induced subgraph of order at least $2(|V(G)|-14)\slash 7$ in the graph obtained from $G'-I-\{k\}$ by deleting all isolated vertices, together with the edge set $\{au,uz_1,uz_2\}$ is an odd induced subgraph of order at least $2|V(G)|\slash 7$. 
    \end{proof}

    Subclaim \ref{subclaim main thm: 2 iv} implies that $|I|= 1$ and $|W|=7$. Let $I=\{z\}$, $N_G(a)=\{b,c,y_1,y_2\}$ and $N_G(b)=\{a,c,y_3,y_4\}$.  Let $W'=\{w\in W: N_G(w)\setminus (W\cup \{z\})\ne \emptyset \}$.  

    \noindent \textbf{Case 1:} $W'=\emptyset$.
    
     In this case, we have that $V(G)=W\cup \{z\}$. Note that $y_1$ is adjacent to $y_2$, and $y_3$ is adjacent to $y_4$, since otherwise $G[\{a,b,y_1,y_2\}]$ or $G[\{a,b,y_3,y_4\}]$ is an odd induced subgraph of order at least $2|V(G)|\slash 7$. Moreover, if neither $y_1$ nor $y_2$ is adjacent to $c$, then the edge set $\{bc, y_{1}y_{2}\}$ forms an odd induced subgraph of order at least $2|V(G)|\slash 7$. Thus, without loss of generality, we can assume that $y_1$ is adjacent to $c$ and by symmetry, assume that $y_3$ is adjacent to $c$. Then $d_G(c)=4$ and $N_G(c)=\{a,b,y_1,y_3\}$.  Since $\delta(G)\geq3 $, $y_2$ or $y_4$ is adjacent to $z$. Then $G[\{b,c,y_2,z\}]$ or $G[\{a,c,y_4,z\}]$ is an odd induced subgraph of order at least $2|V(G)|\slash 7$, which is a contradiction. 

    \noindent \textbf{Case 2:} There is a neighbor $u$ of $z$ such that $u\in W'$.

     Let $W^*=(W\cup \{z\})\setminus \{u\}$ and $G^*=G\setminus W^*$. Then $G^*$ has no isolated vertices, and it has an odd induced subgraph $H$ of order at least $2(|V(G)|-7)\slash 7$. If $H$ contains $u$, then $G[V(H)\cup \{z,a\}]$ (if $u\in \{c,y_1,y_2\}$) or $G[V(H)\cup \{z,b\}]$ (if $u\in \{c,y_3,y_4\}$) is an odd induced subgraph of order at least $2|V(G)|\slash 7$, which is a contradiction. If not, then  $H$ together with the edge $ab$ is an odd induced subgraph of order at least $2|V(G)|\slash 7$, which is also a contradiction.
    
    \noindent \textbf{Case 3:} $W'\ne \emptyset$ and $N_G(z)\cap W'=\emptyset$.

    Note that there are some $w\in W'$ and some $u\in N_G(z)$ such that $w$ is not adjacent to $u$ since $\delta(G)\ge 3$ and $\Delta(G)\le 4$. Let $W^*=(W\cup \{z\})\setminus \{w\}$ and $G^*=G\setminus W^*$. Then $G^*$ has no isolated vertices and it has an odd induced subgraph $H$ of order at least $2(|V(G)|-7)\slash 7$. $H$ together with the edge $zu$ is an odd induced subgraph of order at least $2|V(G)|\slash 7$, which is a contradiction. 
\end{proof}   
    
    \begin{claim}\label{4-regular}
        $G$ is $4$-regular.
    \end{claim}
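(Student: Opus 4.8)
The plan is to mimic the structure of the previous claims. Suppose, for contradiction, that $G$ is not $4$-regular. Since $\delta(G)\geq 3$ and $\Delta(G)\leq 4$, there is a vertex $v$ with $d_G(v)=3$; write $N_G(v)=\{x_1,x_2,x_3\}$. As $G$ is triangle-free, $x_1,x_2,x_3$ are pairwise non-adjacent, so $G[N_G[v]]$ is an induced $K_{1,3}$, an odd graph of order $4$; this will be the main ``gadget'' spliced back in. I would then run the standard reduction: put $W=N_G[v]\cup N_G(x_1)$, noting $|W|\leq 7$ since triangle-freeness forces the non-$v$ neighbours of $x_1$ to avoid $x_2,x_3$; let $G'=G-W$, let $I$ be the set of isolated vertices of $G'$, and $G''=G'-I$ (which has no isolated vertices). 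Because $v$ and $x_1$ have all their neighbours in $W$, the edge $vx_1$ has no neighbour in $G''$; hence if $|W\cup I|\leq 7$, then an odd induced subgraph of $G''$ of order at least $2(|V(G)|-7)/7$ (available by minimality) together with $vx_1$ is an odd induced subgraph of order at least $2|V(G)|/7$, a contradiction. Thus $|W\cup I|\geq 8$ and, since $|W|\leq 7$, we have $|I|\geq 1$.

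Next I would establish, in the spirit of Subclaims \ref{claim: no tri and > 2 iv} and \ref{subclaim main thm: 2 iv}, that every vertex of $W$ has at most one neighbour in $I$. If some $u\in W$ had neighbours $z_1,z_2\in I$, then $u\notin\{v,x_1\}$, and taking as fourth vertex $v$ (when $u\in\{x_2,x_3\}$) or $x_1$ (when $u\in N_G(x_1)\setminus\{v\}$) yields, via triangle-freeness, an induced $K_{1,3}$ on $\{\,\cdot\,,u,z_1,z_2\}$; splicing this onto an odd induced subgraph obtained after deleting $W$, $I$, and---when $N_G(u)\not\subseteq W\cup I$---one further neighbour $k$ of $u$ (in which case $u$ has degree $4$, which controls $|I|$ and the number of vertices $k$ can isolate) produces an odd induced subgraph of order at least $2|V(G)|/7$. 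Granting this, only the at most five vertices of $\{x_2,x_3\}\cup(N_G(x_1)\setminus\{v\})$ can have a neighbour in $I$; and each vertex of $I$ lies outside $N_G(v)\cup N_G(x_1)$ and has degree $\geq 3$, hence has $\geq 3$ neighbours among those five. This forces $|I|=1$, and then $|W\cup I|\geq 8$ and $|W|\leq 7$ give $|W|=7$, i.e. $d_G(x_1)=4$. The same argument, run with any degree-$3$ neighbour of $v$ in the role of $x_1$, shows every neighbour of $v$ has degree $4$. Writing $N_G(x_1)=\{v,y_1,y_2,y_3\}$ and $I=\{z\}$, the vertex $z$ is then adjacent to exactly three or four of $x_2,x_3,y_1,y_2,y_3$.

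To finish, I would argue by cases on $W'=\{w\in W: N_G(w)\not\subseteq W\cup\{z\}\}$, exactly as in Cases 1--3 of the triangle claim. If $W'=\emptyset$, then $V(G)=W\cup\{z\}$ has order $8$ and $G[N_G[v]]=K_{1,3}$ is an odd induced subgraph of order $4>2\cdot 8/7$. If some neighbour $u$ of $z$ lies in $W'$, delete $(W\cup\{z\})\setminus\{u\}$ (seven vertices; the remainder $G^{*}$ has no isolated vertex, since every surviving vertex outside $W$ already had a neighbour outside $W$ other than $z$, and $u$ has one by the choice of $u$), let $H$ be an odd induced subgraph of $G^{*}$ of order at least $2(|V(G)|-7)/7$, and adjoin to $H$ the edge $vx_1$ when $u\notin V(H)$, the pair $\{v,z\}$ as two pendant vertices at $u$ when $u\in\{x_2,x_3\}\cap V(H)$, and the pair $\{x_1,z\}$ as two pendant vertices at $u$ when $u\in\{y_1,y_2,y_3\}\cap V(H)$; in each case a short degree check shows the result is an odd induced subgraph of order at least $2|V(G)|/7$. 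Finally, if $W'\neq\emptyset$ but no neighbour of $z$ lies in $W'$, then $\delta(G)\geq 3$ and $\Delta(G)\leq 4$ provide $w\in W'$ and a neighbour $u$ of $z$ with $w\not\sim u$; deleting $(W\cup\{z\})\setminus\{w\}$ and adjoining the edge $zu$ to the resulting odd induced subgraph again gives order at least $2|V(G)|/7$.

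The step I expect to be the main obstacle is the subclaim. The delicate point is to keep the total number of vertices deleted, plus all newly created isolated vertices, at most $14$, so that the order-$4$ gadget $K_{1,3}$ alone pushes the bound over $2|V(G)|/7$; this forces one to exploit that the offending vertex $u$ is saturated---which caps $|I|$ and limits how many vertices the extra deleted neighbour $k$ can orphan---and it is essentially the only place where the constant $2/7$ is genuinely tight. (An alternative would be to skip the subclaim and analyse the finitely many possibilities $|I|\in\{1,\dots,5\}$ directly, using that the $|I|$ isolated vertices are near-twins with respect to $\{x_2,x_3\}\cup(N_G(x_1)\setminus\{v\})$.) By contrast, the endgame case analysis should go through routinely with the pendant-gadget trick above.
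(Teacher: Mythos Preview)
Your plan is correct and follows essentially the same route as the paper. The only difference is organizational: you package the $|I|\geq 2$ analysis as a single subclaim ``each vertex of $W$ has at most one neighbour in $I$'' (mirroring Subclaims~\ref{claim: no tri and > 2 iv} and~\ref{subclaim main thm: 2 iv}), whereas the paper's proof of this claim drops the subclaim format and instead disposes of $|I|\in\{4,5\}$ (finding a vertex with three neighbours in $I$ and using the star on those four) and $|I|\in\{2,3\}$ (your $K_{1,3}$ gadget plus deleting the one extra neighbour $k$) as two short separate cases. The $|I|=1$ endgame---the three-way split on $W'$, the pendant-pair trick, and the isolated edge $zu$ in the last case---is identical in both. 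Your concern about the subclaim bound is well placed but the count does go through: with five boundary vertices each sending at most three edges out of $W$, one gets $3|I|+2|I^*|+1\leq 15$, which together with $|I^*|\leq 3$ and $|I|\geq 2$ forces $|I|+|I^*|\leq 5$, so at most $13\leq 14$ vertices are removed.
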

\begin{proof}

  For the sake of contradiction, assume that there is a vertex, say $p$, such that $N_G(p)=\{x_1,x_2,x_3\}$. Then let $N_G(x_1)=\{p,y_1,\dots,y_{d_G(x_1)-1}\}$, and $W=N_G(p)\cup N_G(x_1)$ and $G'=G-W$. Let $I$ be the set of isolated vertices in $G'$.

      \begin{figure}[htbp]
 \centering   
\begin{tikzpicture}[node distance=1.5cm,scale=0.9]

   \draw (0,0) node[fill=orange,minimum size =4pt,circle,inner sep=1pt,label=west:$p$] (p) {};
    \draw (1,1.5) node[fill=orange,minimum size =4pt,circle,inner sep=1pt,label=below:$x_1$] (x1) {};
   \draw (1,0) node[fill=black,minimum size =4pt,circle,inner sep=1pt,label=below:$x_2$] (x2) {};
    \draw (1,-1.5) node[fill=black,minimum size =4pt,circle,inner sep=1pt,label=below:$x_3$] (x3) {};
   \draw (3,2) node[fill=black,minimum size =4pt,circle,inner sep=1pt] (z11) {};
    \draw (3,1.5) node[fill=black,minimum size =4pt,circle,inner sep=1pt] (z12) {};
    \draw (3,1) node[fill=black,minimum size =4pt,circle,inner sep=1pt] (z13) {};

\draw[orange] (p)--(x1);
\draw (p)--(x2);
\draw (p)--(x3);

\draw (x1)--(z11);
\draw (x1)--(z12);
\draw (x1)--(z13);

\end{tikzpicture}

    \caption{The graph $G[W]$ when $G$ contains no triangles and $\delta(G)=3$.}
    \end{figure}
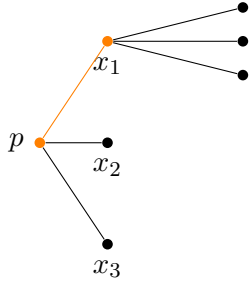
   
     Note that $|W\cup I|\ge 8$, since otherwise the odd induced subgraph of order at least $2(|V(G)|-7)\slash 7$ in $G'-I$, together with the edge $px_1$ is an odd induced subgraph of order at least $2|V(G)|\slash 7$, which is a contradiction. Therefore, $1\le |I|\le 5$.

    If $|I|=4$ or 5, then  $|N_G(y_{i_0})\cap I|=3$ for some $i_0\in [d_G(x_1)-1]$ or $|N_G(x_{j_0})\cap I|=3$ for some $j_0\in \{2,3\}$. Without loss of generality, let $N_G(y_1)\cap I=\{z_1,z_2,z_3\}$. Then the odd induced subgraph of order at least $2(|V(G)|-12)\slash 7$ in $G'-I$, together with the edge set $\{y_1z_1,y_1z_2,y_1z_3\}$ is an odd induced subgraph of order at least $2|V(G)|\slash 7$.

     If $|I|=2$ or 3, then  $|N_G(y_{i_0})\cap I|\ge 2$ for some $i_0\in [d_G(x_1)-1]$ or $|N_G(x_{j_0})\cap I|\ge 2$ for some $j_0\in \{2,3\}$. Without loss of generality, let $\{z_1,z_2\}\subseteq N_G(y_1)\cap I$. Let $G^*=G-W-I-N_G(y_1)$. Then the odd induced subgraph of order at least $2(|V(G)|-14)\slash 7$  in the graph obtained from $G^*$ by deleting all isolated vertices, together with the edge set $\{x_1y_1,y_1z_1,y_1z_2\}$ is an odd induced subgraph of order at least $2|V(G)|\slash 7$.

     If $|I|=1$, then $|W|= 7$. Let $I=\{z_1\}$ and $W'=\{w\in W ~|~ N_G(w)\setminus (W\cup \{z_1\})\ne \emptyset \}$. Note that  $W'\ne \emptyset$, since otherwise $V(G)=W\cup\{z_1\}$ and $G[\{p,x_1,x_2,x_3\}]$ is an odd induced subgraph of order at least $2|V(G)|\slash 7$. If there is a neighbor of $z_1$, without loss of generality, say $y_1$, such that $y_1\in W'$, then let $W^*=(W\cup \{z_1\})\setminus \{y_1\}$ and $G^*=G- W^*$. The odd induced subgraph $H^*$ of order at least $2(|V(G)|-7)\slash 7$ in $G^*$, together with the edge $px_1$ (if $y_1\notin V(H^*)$) or together with the edge set $\{x_1y_1,y_1z_1\}$ (if $y_1\in V(H^*)$), is an odd induced subgraph of order at least $2|V(G)|\slash 7$, which is a contradiction. Therefore,  $N_G(z_1)\cap W'=\emptyset$. Since $d_G(z_1)\ge 3$, we can always find  $w \in W'$ and $u\in N_G(z_1)$ such that $w$ is not adjacent to $u$. Let $W^*=(W\cup \{z_1\})\setminus \{w\}$ and $G^*=G- W^*$. Then the odd induced subgraph $H^*$ of order at least $2(|V(G)|-7)\slash 7$ in $G^*$, together with the edge $z_1u$ is an odd induced subgraph of order at least $2|V(G)|\slash 7$. 
  \end{proof}

    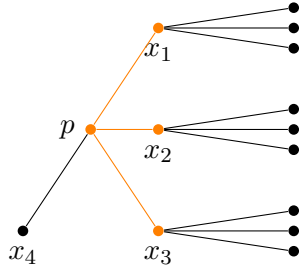
\begin{figure}[htbp]
 \centering   
\begin{tikzpicture}[node distance=1.5cm,scale=0.9]

   \draw (0,0) node[fill=orange,minimum size =4pt,circle,inner sep=1pt,label=west:$p$] (p) {};
    \draw (1,1.5) node[fill=orange,minimum size =4pt,circle,inner sep=1pt,label=below:$x_1$] (x1) {};
   \draw (1,0) node[fill=orange,minimum size =4pt,circle,inner sep=1pt,label=below:$x_2$] (x2) {};
    \draw (1,-1.5) node[fill=orange,minimum size =4pt,circle,inner sep=1pt,label=below:$x_3$] (x3) {};
    \draw (-1,-1.5) node[fill=black,minimum size =4pt,circle,inner sep=1pt,label=below:$x_4$] (x4) {};
   \draw (3,1.8) node[fill=black,minimum size =4pt,circle,inner sep=1pt] (z11) {};
    \draw (3,1.5) node[fill=black,minimum size =4pt,circle,inner sep=1pt] (z12) {};
    \draw (3,1.2) node[fill=black,minimum size =4pt,circle,inner sep=1pt] (z13) {};
    \draw (3,0.3) node[fill=black,minimum size =4pt,circle,inner sep=1pt] (z21) {};
    \draw (3,0) node[fill=black,minimum size =4pt,circle,inner sep=1pt] (z22) {};
    \draw (3,-0.3) node[fill=black,minimum size =4pt,circle,inner sep=1pt] (z23) {};
    \draw (3,-1.2) node[fill=black,minimum size =4pt,circle,inner sep=1pt] (z31) {};
    \draw (3,-1.5) node[fill=black,minimum size =4pt,circle,inner sep=1pt] (z32) {};
    \draw (3,-1.8) node[fill=black,minimum size =4pt,circle,inner sep=1pt] (z33) {};
   
\draw[orange] (p)--(x1);
\draw[orange] (p)--(x2);
\draw[orange] (p)--(x3);
\draw (p)--(x4);
\draw (x1)--(z11);
\draw (x1)--(z12);
\draw (x1)--(z13);
\draw (x2)--(z21);
\draw (x2)--(z22);
\draw (x2)--(z23);
\draw (x3)--(z31);
\draw (x3)--(z32);
\draw (x3)--(z33);

\end{tikzpicture}

    \caption{The graph $G[W]$ when $G$ contains no triangles and is 4-regular.}
    \end{figure}

   Let $p$ be an arbitrary vertex of $G$. Let $N_G(p)=\{x_1,x_2,x_3,x_4\}$, $W=N_G(p)\cup N_G(x_1) \cup N_G(x_2) \cup N_G(x_3)$ and $G'=G- W$. Then $|W|\le 14$. Let $I$ be the set of isolated vertices in $G'$. 
   
   Note that $|W\cup I|\ge 15$, since otherwise $G'-I$ contains an odd induced subgraph $H'$ of order at least $2(|V(G)|-14)\slash 7$, and $H'$ together with the edge set $\{px_1,px_2,px_3\}$ is an odd induced subgraph of order at least $2|V(G)|\slash 7$. Therefore, $1\le |I|\le 7$.

Firstly, we give three useful observations to describe the neighbors of the vertex in $W$.

    \begin{observation}\label{obv: 4-reg and 3 neig in I}
        Each vertex in $W$ has at most two neighbors in $I$.
    \end{observation}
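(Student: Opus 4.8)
The plan is to argue by contradiction, following the pattern of the earlier claims. Suppose some $u\in W$ has three neighbours $z_1,z_2,z_3\in I$. First I would locate $u$. Since every neighbour of $p$ and of $x_1,x_2,x_3$ already lies in $W$, a vertex of $W$ all of whose neighbours are again in $W$ has no neighbour in $I$; hence $u\notin\{p,x_1,x_2,x_3\}$, and a short check of the remaining cases (using that $\Delta(G)\le 4$) shows that either $u=x_4$, in which case its fourth neighbour is $k=p$, or $u\notin\{p,x_1,x_2,x_3,x_4\}$, $u$ is adjacent to exactly one of $x_1,x_2,x_3$, say $x_j$, and the fourth neighbour of $u$ is $k=x_j$. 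In every case $G[\{u,z_1,z_2,z_3\}]\cong K_{1,3}$ — here we use that $G$ is triangle‑free, so $z_1,z_2,z_3$ are pairwise non‑adjacent — which is an odd graph. The aim is to extend this star to an odd induced subgraph of $G$ of order at least $2n/7$, contradicting the minimality of $G$.

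Next I would compute the closed neighbourhood of the star. Since $z_i\in I$, all of $N_G(z_i)$ lies in $W$; since $G$ is triangle‑free, $N_G(z_i)$ is independent, so $u$ is non‑adjacent to the three neighbours of $z_i$ other than $u$, and in particular $k\notin N_G(z_i)$ for each $i$. Setting $Y=\bigcup_{i=1}^{3}\bigl(N_G(z_i)\setminus\{u\}\bigr)$ we get $Y\subseteq W$, $k\notin Y$, $|Y|\le 9$, and
\[
N_G\bigl[\{u,z_1,z_2,z_3\}\bigr]=\{u,z_1,z_2,z_3\}\cup Y\cup\{k\},\qquad |Y\cup\{k\}|\le 10 .
\]
Now delete $A:=\{u,z_1,z_2,z_3\}\cup Y\cup\{k\}$ (of size at most $14$) from $G$, then delete the set $J$ of vertices that become isolated, obtaining a graph $G^{\#}$ with no isolated vertices and $|V(G^{\#})|\ge n-|A|-|J|$. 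By minimality $G^{\#}$ has an odd induced subgraph $H$ with $|V(H)|\ge \tfrac{2}{7}|V(G^{\#})|$. As $A$ contains the whole external neighbourhood of $\{u,z_1,z_2,z_3\}$, there are no edges between $V(H)$ and $\{u,z_1,z_2,z_3\}$, so $G[V(H)\cup\{u,z_1,z_2,z_3\}]$ is an odd induced subgraph of $G$ (the disjoint union of $H$ with $K_{1,3}$) of order at least $\tfrac{2}{7}(n-|A|-|J|)+4$. This is at least $2n/7$ as soon as $|A|+|J|\le 14$, i.e. as soon as $|Y|+|J|\le 9$, which would give the contradiction.

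Everything therefore reduces to the estimate $|J|\le 9-|Y|$ on the number of newly isolated vertices, and this is the step I expect to be the main obstacle. The key point is that any $v\in J$ satisfies $N_G(v)\subseteq Y\cup\{k\}\subseteq W$: $v$ cannot be adjacent to $u$ (whose neighbours are $z_1,z_2,z_3,k$) nor to any $z_i$ (whose neighbours are $u$ together with three vertices of $Y$). Hence $v$ is either a vertex of $I\setminus\{z_1,z_2,z_3\}$ all of whose neighbours lie in $Y\cup\{k\}$, or one of the vertices $p,x_1,x_2,x_3,x_4$ whose neighbourhood can be fitted inside $Y\cup\{k\}$. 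To finish I would treat the two possibilities for $u$ separately and combine these structural restrictions with an edge count in the $4$‑regular graph: $z_1,z_2,z_3$ send exactly nine edges into $Y$ and $u$ sends exactly one edge (to $k$), while every vertex of $Y\cup\{k\}$, lying in $W$, must spend several of its four edges on neighbours in $W$ that are not in $J$; pushing this bookkeeping through gives $|Y|+|J|\le 9$. In the few residual configurations in which a vertex of $\{p,x_1,x_2,x_3\}$ would be forced into $J$, one instead enlarges the payload to the disjoint union of $\{u,z_1,z_2,z_3\}$ with a suitable sub‑star at $p$ on $\{p,x_1,x_2,x_3,x_4\}$, for which the analogous inequality carries more slack, and re‑runs the estimate.
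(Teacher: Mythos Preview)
Your argument has a genuine gap at the step you yourself flag as the obstacle. The inequality $|Y|+|J|\le 9$ does \emph{not} hold in general. Take the case $u=x_4$, so $k=p$ and $N_G(x_4)=\{p,z_1,z_2,z_3\}$. Since $G$ is $4$-regular and triangle-free, it is perfectly consistent that the nine $y$-vertices $N_G(x_1)\cup N_G(x_2)\cup N_G(x_3)\setminus\{p\}$ are exactly the nine ``other'' neighbours of $z_1,z_2,z_3$, giving $|Y|=9$. Then $A=\{x_4,z_1,z_2,z_3,p\}\cup Y$ contains every neighbour of each $x_j$ ($j=1,2,3$), so $x_1,x_2,x_3\in J$ and $|Y|+|J|\ge 12$. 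Your fallback (``enlarge the payload to the disjoint union of $\{u,z_1,z_2,z_3\}$ with a sub-star at $p$'') does not work here either, because $p$ is adjacent to $u=x_4$, so no such \emph{disjoint} union exists; you would have to redesign the payload, not just enlarge it.

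The paper sidesteps all of this bookkeeping with a single observation you are not exploiting: the set $W\cup I$ has \emph{already} been deleted, and $G'-I$ has no isolated vertices \emph{by the definition of $I$}. Since $|W|\le 14$ and $|I|\le 7$, the induction gives an odd $H'\subseteq G'-I$ with $|V(H')|\ge \tfrac{2}{7}(n-21)$, and it suffices to exhibit a $6$-vertex odd subgraph inside $W\cup I$ whose vertices have no neighbours in $V(G')\setminus I$. When $u=x_4$ one takes $\{z_1,z_2,x_4,p,x_1,x_2\}$ (a double star: $x_4$ and $p$ have degree~$3$, the others degree~$1$); when $u=y_1\in N_G(x_1)$ one takes the disjoint union of the star $\{y_1,z_1,z_2,z_3\}$ with the edge $px_2$. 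In both cases every listed vertex has all its $G$-neighbours in $W\cup I$, so the union with $H'$ is odd of order at least $\tfrac{2}{7}(n-21)+6=\tfrac{2n}{7}$. The point is that a $6$-vertex payload against the fixed $21$-vertex budget removes the need to control newly isolated vertices at all; your $4$-vertex payload forces a $14$-vertex budget that simply is not available.
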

 \begin{proof}
     Suppose that 
 there is a vertex in the set $W$ that has three neighbors in $I$.
       Since the neighborhoods of $p$, $x_1$, $x_2$ and $x_3$ are all contained in $W$, without loss of generality, we may assume that $x_4$, or one vertex in $N_G(x_1)\setminus \{p\}$, say $y_1$, has three neighbors in $I$.

       Firstly, if $x_4$ has three neighbors in $I$, say $z_1,z_2$ and $z_3$, then the odd induced subgraph of order at least $2(|V(G)|-|W|-|I|)\slash 7 \ge 2(|V(G)|-21)\slash 7$ in $G'-I$, together with the edge set $\{z_1x_4,z_2x_4,x_4p,px_1,px_2\}$, is an odd induced subgraph of order at least $2|V(G)|\slash 7$, which is a contradiction.

       If $y_1$ has three neighbors in $I$, also say $z_1,z_2$ and $z_3$, then the odd induced subgraph of order at least $2(|V(G)|-|W|-|I|)\slash 7 \ge 2(|V(G)|-21)\slash 7$ in $G'-I$, together with the edge set $\{px_2,y_1z_1,y_1z_2,y_1z_3\}$, is an odd induced subgraph of order at least $2|V(G)|\slash 7$, which is a contradiction.
\end{proof}

   \begin{observation}\label{obv: 4-reg and 2 I of x4}
        If $|I|\le 5$,  then $x_4$ has at most one neighbor in $I$.
     \end{observation}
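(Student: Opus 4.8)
Suppose, for contradiction, that $x_4$ has at least two neighbours in $I$. Since $x_4\in W$, Observation~\ref{obv: 4-reg and 3 neig in I} forces it to have \emph{exactly} two, say $z_1,z_2\in N_G(x_4)\cap I$, and then $x_4$ has exactly one further neighbour $w$ (which may lie in $W$ or outside it; in any case $w\notin\{p,z_1,z_2,x_4\}$). The plan is to build a fixed odd induced subgraph on six vertices sitting inside $W\cup I$ around $p$ and $x_4$, and to glue it to a large odd induced subgraph supplied by the minimality of $n$ on the part of $G$ that this gadget does not touch.

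First I would check that $F:=G[\{p,x_1,x_2,x_4,z_1,z_2\}]$ is an odd induced subgraph of order $6$. Its edge set is exactly $\{px_1,px_2,px_4,x_4z_1,x_4z_2\}$: since $G$ is triangle-free and $p$ is adjacent to $x_1,x_2,x_4$, there are no edges among $x_1,x_2,x_4$ and no edge $z_1z_2$; and since $z_1,z_2\in I$ they lie outside $W\supseteq N_G(p)\cup N_G(x_1)\cup N_G(x_2)$, so $z_1,z_2$ are non-adjacent to $p,x_1,x_2$. Thus $p$ and $x_4$ have degree $3$ in $F$, while $x_1,x_2,z_1,z_2$ have degree $1$, so $F$ is odd.

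Next, set $G^*=G-(W\cup I\cup\{w\})$ and let $G^{**}$ be $G^*$ with its isolated vertices deleted. Using $|W|\le 14$ and the hypothesis $|I|\le 5$, we get $|V(G^{**})|\ge |V(G)|-20$, and since $p\in W$ we have $G^{**}\subsetneq G$, so by the minimality of $n$ the graph $G^{**}$ has an odd induced subgraph $H^*$ of order at least $2(|V(G)|-20)/7$. The key point that makes the gluing legitimate is that every vertex of $F$ has all of its $G$-neighbours inside $W\cup I\cup\{w\}$: for $p,x_1,x_2$ because their neighbourhoods are contained in $W$; for $z_1,z_2$ because they are isolated in $G-W$; and for $x_4$ because $N_G(x_4)=\{p,z_1,z_2,w\}$. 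Hence no edge joins $V(F)$ to $V(H^*)$, so $F\cup H^*$ is an odd induced subgraph of $G$ of order at least $2(|V(G)|-20)/7+6=(2|V(G)|+2)/7>2|V(G)|/7$, contradicting the choice of $G$.

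The only step requiring care is the accounting: the gadget $F$ has just $6$ vertices, so deleting $r$ vertices (which costs $2r/7$ against the target) is affordable precisely when $r\le 21$, and here $r\le|W|+|I|+1\le 14+|I|+1$; the hypothesis $|I|\le 5$ secures $r\le 20\le 21$ with room to spare, which is exactly why it is needed. Everything else is routine — the three non-adjacencies that pin down $E(F)$, the fact that $w$ is well defined and distinct from $p,z_1,z_2$, and the applicability of minimality — so I do not anticipate any genuine obstacle beyond keeping this inequality in view.
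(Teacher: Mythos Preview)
Your gadget $F=G[\{p,x_1,x_2,x_4,z_1,z_2\}]$ and its verification are correct, and the overall strategy is the same as the paper's. However, there is a genuine gap in the accounting step. You assert $|V(G^{**})|\ge |V(G)|-20$, but the bound $|W\cup I\cup\{w\}|\le 20$ only controls $|V(G^*)|$, not $|V(G^{**})|$. When $w\notin W\cup I$, removing $w$ from the (isolate-free) graph $G'-I$ can create new isolated vertices: $w$ has three neighbours besides $x_4$, and any of them lying in $V(G'-I)$ might have $w$ as its sole neighbour there. In the worst case this adds up to three further deletions, pushing your $r$ to $23$, and then the six-vertex gadget no longer suffices (you yourself note the threshold is $r\le 21$).

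The paper's proof uses exactly your gadget $F$ but closes this gap with a short case split on $w$ (called $k$ there): if $N_G(x_4)\subseteq W\cup I$, or if $w\notin W\cup I$ but $G^*$ has at most one new isolated vertex, then $r\le 21$ and your argument goes through verbatim; if instead $G^*$ has at least two new isolated vertices $t_1,t_2$, the paper upgrades to an eight-vertex odd gadget $G[\{p,x_1,x_2,x_4,z_1,w,t_1,t_2\}]$ (with edge set $\{px_1,px_2,px_4,x_4z_1,x_4w,wt_1,wt_2\}$), additionally deleting the rest of $N_G(w)$, which buys the extra two vertices needed to cover the larger deletion. Your proof is essentially the paper's minus this last subcase.
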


 \begin{proof}
    For the sake of contradiction, assume that $N_G(x_4)\cap I=\{z_1,z_2\}$.
    
    Firstly, if $N_G(x_4)\subseteq W\cup I$, then the odd induced subgraph of order at least $2(|V(G)|-|W|-|I|)\slash 7 \ge 2(|V(G)|-19)\slash 7$ in $G'$, together with the edge set $\{z_1x_4,z_2x_4,x_4p,px_1,px_2\}$, is an odd induced subgraph of order at least $2|V(G)|\slash 7$.  

    So, let $N_G(x_4)\setminus (W\cup I)=\{k\}$. Let $G^*=G-W-I-\{k\}$. If $G^*$ contains at most one isolated vertex, the odd induced subgraph of order at least $$2(|V(G)|-|W|-|I|-1-1)\slash 7 \ge 2(|V(G)|-21)\slash 7$$ in $G^*$, together with the edge set $\{z_1x_4,z_2x_4,x_4p,px_1,px_2\}$, is an odd induced subgraph of order at least $2|V(G)|\slash 7$.
      Therefore, $G^*$ contains at least two isolated vertices, say $t_1$ and $t_2$. Then let $G_2^*=G^*- N_G(k)$. Then the odd induced subgraph of order at least $$2(|V(G)|-|W|-|I|-1-6)\slash 7 \ge 2(|V(G)|-26)\slash 7$$ in the graph obtained from $G_2^*$ by deleting all isolated vertices, together with the edge set $\{px_1,px_2, px_4, x_4z_1, x_4k, kt_1, kt_2\}$, is an odd induced subgraph of order at least $2|V(G)|\slash 7$.
           \end{proof} 

    \begin{observation}\label{obv: 4-regular x4 1 in I and 1 in G'}
        If $|W\cup I|=15$, then $x_4$ has no neighbors in $I$ or has no neighbors outside $W\cup I$.        
    \end{observation}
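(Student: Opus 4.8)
The plan is to derive a contradiction from the assumption that $x_4$ has both a neighbour $z_1\in I$ and a neighbour $k\notin W\cup I$. First I would pin down the local structure. Since $W\cap I=\emptyset$ and $|W|\le 14$, the hypothesis $|W\cup I|=15$ forces $|W|=15-|I|$ with $|I|\ge 1$; moreover each of $p,x_1,x_2,x_3$ has all of its neighbours in $W$ and hence none in $I$, so only $x_4$ and the $|W|-5=10-|I|$ ``second-level'' vertices of $W$ can have neighbours in $I$. Every vertex of $I$ is isolated in $G-W$, so all four of its neighbours lie in $W$; double-counting the edges between $W$ and $I$ and using Observation~\ref{obv: 4-reg and 3 neig in I} gives $4|I|\le 2(|W|-4)=22-2|I|$, so $|I|\le 3$. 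In particular $|I|\le 5$, so Observation~\ref{obv: 4-reg and 2 I of x4} yields $|N_G(x_4)\cap I|\le 1$, whence $N_G(x_4)\cap I=\{z_1\}$; write $N_G(x_4)=\{p,z_1,k,w\}$ with $w\notin I$. I would also record the non-adjacencies forced by $\Delta(G)\le 4$, triangle-freeness and the definition of $W$: $k\not\sim p,x_1,x_2,x_3$ (their neighbourhoods lie in $W$, while $k\notin W$) and $k\not\sim z_1$; $z_1\not\sim p$ and $x_4\not\sim x_1,x_2,x_3$ (triangles through $p$); any two neighbours of $k$ are non-adjacent (a triangle at $k$); and $G-(W\cup I)$ has no isolated vertex, since such a vertex would have all its neighbours in $W$ and would therefore lie in $I$.

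The general mechanism is the one used in the proofs of the preceding observations: choose a vertex set $S$ of bounded size, let $t$ be the number of isolated vertices of $G-S$, apply the minimality of $G$ to $G-S$ with these isolated vertices removed to get an odd induced subgraph $H^*$ with $|V(H^*)|\ge 2(n-|S|-t)/7$, and glue onto it an explicit odd subgraph $F$ with $V(F)$ disjoint from $V(H^*)$ and $N_G(V(F))\cap V(H^*)=\emptyset$; then $G[V(H^*)\cup V(F)]$ is odd, and as soon as $7|V(F)|\ge 2(|S|+t)$ it has order at least $2n/7$, contradicting the choice of $G$. (If $G-S$ has no edges, then $n=|S|+t$ is a bounded number and one exhibits an odd induced subgraph of order at least $2n/7$ directly from the local picture.)

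Concretely I would take $S:=(W\cup I)\cup N_G[k]$, enlarged by $w$ when $w\notin W$; since $x_4\in N_G(k)\cap W$ this keeps $|S|\le 15+4+1=20$, and because $G-(W\cup I)$ has no isolated vertices, every isolated vertex of $G-S$ is a neighbour of some vertex of $(N_G(k)\setminus W)\cup(\{w\}\setminus W)$, which bounds $t$. The gadget $F$ is a short caterpillar: $x_4$ has degree $3$ with legs $z_1$, $p$ and (an appropriate selection among) the neighbours of $k$; $p$ has degree $3$ with legs $x_4$ together with two of $\{x_1,x_2,x_3\}$; all remaining vertices of $F$ are leaves. By the non-adjacencies above, $F$ is an induced subgraph with exactly this all-odd degree sequence \emph{unless} $z_1$ is adjacent to some of $x_1,x_2,x_3$, which forces a split: (a) if $z_1$ is adjacent to at most one of $x_1,x_2,x_3$, take the two legs of $p$ among the non-neighbours of $z_1$, so that $F$ is exactly the intended caterpillar; (b) if $z_1$ is adjacent to at least two of them, rebuild $F$ with $z_1$ as a degree-$3$ hub (legs among $\{x_1,x_2,x_3,x_4\}$), padding $F$ with extra vertices of $I$ or extra neighbours of $k$ so that each used $x_i$ and each padding vertex receives odd degree, using Observation~\ref{obv: 4-reg and 3 neig in I} to bound how many $I$-neighbours an $x_i$ can have. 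In each case one verifies $7|V(F)|\ge 2(|S|+t)$ and concludes.

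The hard part will be case (b) combined with the accounting of isolated vertices: producing a caterpillar with all degrees odd while $z_1$ (and perhaps $k$) is entangled with several of $x_1,x_2,x_3$, and at the same time guaranteeing that deleting $N_G[k]$ — and, in the awkward sub-case where an outside neighbour $b$ of $k$ itself has a neighbour outside $W\cup I\cup\{k\}$, also $N_G(b)$ — does not create enough isolated vertices to break the inequality $7|V(F)|\ge 2(|S|+t)$. I expect this to reduce to a brief enumeration according to $|N_G(k)\cap W|\in\{1,2,3\}$, to whether the outside neighbours of $k$ have all their remaining neighbours in $W$, and to the adjacency pattern between $z_1$ and $\{x_1,x_2,x_3\}$, with a gadget tailored to each configuration.
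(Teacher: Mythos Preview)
Your plan heads in a workable direction but is far more involved than needed, and as written it remains a sketch with an acknowledged ``hard part''. The paper's proof is essentially three lines, via a device you have overlooked: \emph{do not delete $x_4$}. Set $W^*=W\setminus\{x_4\}$ and $G^*=G-W^*-I$. Since $|W\cup I|=15$, exactly $14$ vertices are removed; because $x_4$ has a neighbour outside $W\cup I$ it is not isolated in $G^*$, and because every isolated vertex of $G-W$ already lies in $I$, the graph $G^*$ has no isolated vertices at all. Minimality then yields an odd induced subgraph $H^*$ of $G^*$ with $|V(H^*)|\ge 2(n-14)/7$, and one simply branches on whether $x_4\in V(H^*)$: if not, adjoin the star $\{px_1,px_2,px_3\}$; if so, adjoin $z,p,x_1,x_2$ via the edges $x_4z,\,x_4p,\,px_1,\,px_2$, which raises the degree of $x_4$ in $H^*$ by two and makes each new vertex a leaf or a degree-$3$ hub. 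The required non-adjacencies ($x_i\not\sim x_j$, $x_i\not\sim z$, $p\not\sim z$, and $N_G(p),N_G(x_1),N_G(x_2),N_G(z)\subseteq W$) all follow from triangle-freeness and the definitions of $W$ and $I$. Either branch adds four vertices and reaches $2n/7$.

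Your route instead deletes up to $20$ vertices and must then manufacture a gadget $F$ with $7|V(F)|\ge 2(|S|+t)$. With your own accounting this can force $|V(F)|\ge 10$, well beyond the six-to-eight-vertex caterpillars you describe, and the phrase ``an appropriate selection among the neighbours of $k$'' is never made precise (note that $x_4$ is adjacent to $k$, not in general to the other neighbours of $k$). The enumeration you anticipate in case~(b), together with the further deletion of $N_G(b)$ you mention, would spawn several sub-cases that you have not worked out. All of this collapses once one keeps $x_4$ in the graph and branches on its membership in $H^*$; that single idea is what your proposal is missing.
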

\begin{proof}

    For the sake of contradiction, assume that $z\in N_G(x_4)\cap I$ and $N_G(x_4)\setminus (W\cup I)\ne \emptyset$.  Let $W^*=W\setminus \{x_4\}$ and $G^*=G-W^*-I$. Then the odd induced subgraph $H^*$ of order at least $2(|V(G)|-14)\slash 7$ in $G^*$, together with the edge set $\{px_1,px_2,px_3\}$ (if $x_4\notin V(H^*)$), or together with the edge set $\{x_4z,x_4p,px_1,px_2\}$ (if $x_4\in V(H^*)$), is an odd induced subgraph of order at least $2|V(G)|\slash 7$.
\end{proof}

    Then $|I|\le 4$, and moreover $12\le |W|\le 14$, and if $|W|=12$ then $|I|= 3$, by Observation~\ref{obv: 4-reg and 3 neig in I} and Observation~\ref{obv: 4-reg and 2 I of x4}. 
    
    Let $W_0=\{w\in W : |N_G(w)\cap I|=0\}$, $W_1=\{w\in W : |N_G(w)\cap I|=1\}$ and $W_2=\{w\in W: |N_G(w)\cap I|=2\}$. Observation~\ref{obv: 4-reg and 3 neig in I} implies that $W=W_0\cup W_1 \cup W_2$. Observation \ref{obv: 4-reg and 2 I of x4} implies that $x_4\notin W_2$.
    
    If $|W|=12$ and $|I|=3$, then $4\le |W_2|\le 6$. If $|W_2|=4$,  then $|W_1|=4$, $|W_0|=0$, and by Observation \ref{obv: 4-reg and 2 I of x4} and \ref{obv: 4-regular x4 1 in I and 1 in G'}, $|N_G(x_4) \cap I|=1$ and $N_G(x_4) \setminus (W\cup I)=\emptyset$. Without loss of generality, assume that $y_1\in N_G(x_1)\setminus (N_G(x_2)\cup N_G(x_3))$ such that $y_1\in W_2$. Let $z\in (N_G(y_1)\cap I)\setminus N_G(x_4)$. The odd induced subgraph of order at least $2(|V(G)|-12-3-4)\slash 7 \ge 2(|V(G)|-19)\slash 7$ in the graph obtained from $G'-I-N_G(y_1)$ by deleting all isolated vertices, together with the edge set $\{px_2,px_3, px_4, y_1z\}$ (if $x_4 \notin N_G(y_1)$) or together with $G[\{p,x_4,y_1\}\cup I]$ (if $x_4 \in N_G(y_1)$), is an odd induced subgraph of order at least $2|V(G)|\slash 7$. If $|W_2|\ge 5$, then $|W_1|=2$ or $|W_2|= 6$. Therefore, without loss of generality, assume that $y_1,y_2\in N_G(x_1)\setminus (N_G(x_2)\cup N_G(x_3))$ such that $y_1\in W_2$ and $y_2\in W_1\cup W_2$. Let $K=N_G(y_1)\cup N_G(y_2)\setminus (W\cup I)$. Then $|K|\le 3$. Let $G^*=G-W-I-K$ and let $I^*$ be the set of isolated vertices in $G^*$. Then we have that 
    
    \begin{align*}
    |I^*| \le \left \lfloor \frac{1}{4}\left ( \sum_{v\in W\setminus \{p,x_1,x_2,x_3\}} |N_G(v)\cap V(G')|+ \sum_{u\in K}(d_G(u)-1) - 4|I|- |K| \right) \right \rfloor. 
    \end{align*}

    \noindent If $|K|\le 2$, we have that $$|I^*|\le \max \{\left \lfloor \frac{1}{4} (22+ 6-12-2) \right \rfloor, \left \lfloor \frac{1}{4} (22+ 3-12-1) \right \rfloor , \left \lfloor \frac{1}{4} (22 -12) \right \rfloor \} \\=3,$$ and the odd induced subgraph of order at least $2(|V(G^*)|-|I^*|)\slash 7 \ge 2(|V(G)|-20)\slash 7$ in $G^*-I^*$, together with the edge set $\{px_1,px_2, px_3, x_1y_1, x_1y_2\}$, is an odd induced subgraph of order at least $2|V(G)|\slash 7$. Thus $|K|=3$ and $y_2\in W_1$, and $|I^*|\le \left \lfloor \frac{1}{4} (22+ 9-12-3) \right \rfloor =4$. If $|I^*|\le 3$, similarly, the odd induced subgraph of order at least $2(|V(G^*)|-|I^*|)\slash 7 \ge 2(|V(G)|-21)\slash 7$ in $G^*-I^*$, together with the edge set $\{px_1,px_2, px_3, x_1y_1, x_1y_2\}$, is an odd induced subgraph of order at least $2|V(G)|\slash 7$. If $|I^*|=4$, one can find that $V(G)=W\cup I
    \cup K\cup I^*$ and $x_4$ has at least two neighbors in $I^*$, say $t_1$ and $t_2$. Then $G[\{p,x_1,x_2,x_4,y_1,y_2,t_1,t_2\}]$ is an odd induced subgraph of order $8 >  2|V(G)|\slash 7$.


\begin{figure}[htbp]
\centering

\begin{subfigure}{0.45\textwidth}
\centering
\begin{tikzpicture}
[
    scale=1.4,
    vertex/.style={circle, fill=black, inner sep=1pt, minimum size=4pt, draw=none},
    every label/.style={font=\small, inner sep=1pt}
]

\node[vertex, label=above:$p$] (p) at (0,0) {};
\node[vertex, label=above:$x_4$] (x4) at (1,0.8) {};
\node[vertex, label=left:$x_1$] (x1) at (-1,-0.8) {};
\node[vertex, label=left:$x_2$] (x2) at (0,-0.8) {};
\node[vertex, label=right:$x_3$] (x3) at (1,-0.8) {};

\node[vertex] (y3) at (-0.6,-1.5) {};
\node[vertex] (y4) at (0.6,-1.5) {};

\draw[orange, thick] (p)--(x1);
\draw[orange, thick] (p)--(x2);
\draw[orange, thick] (p)--(x3);

\draw (p)--(x4);
\draw (x1)--(y3);
\draw (x2)--(y3);
\draw (x2)--(y4);
\draw (x3)--(y4);

\end{tikzpicture}
\caption{One common edge}
\end{subfigure}
\hfill
\begin{subfigure}{0.45\textwidth}
\centering
\begin{tikzpicture}
[
    scale=1.4,
    vertex/.style={circle, fill=black, inner sep=1pt, minimum size=4pt, draw=none},
    every label/.style={font=\small, inner sep=1pt}
]

\node[vertex, label=above:$p$] (p) at (0,0) {};
\node[vertex, label=above:$x_4$] (x4) at (1,0.8) {};
\node[vertex, label=left:$x_1$] (x1) at (-1,-0.8) {};
\node[vertex, label=left:$x_2$] (x2) at (0,-0.8) {};
\node[vertex, label=right:$x_3$] (x3) at (1,-0.8) {};

\node[vertex] (y3) at (0,-1.5) {};

\draw[orange, thick] (p)--(x1);
\draw[orange, thick] (p)--(x2);
\draw[orange, thick] (p)--(x3);

\draw (p)--(x4);
\draw (x1)--(y3);
\draw (x2)--(y3);
\draw (x3)--(y3);

\end{tikzpicture}
\caption{Two common edges}
\end{subfigure}

    \caption{Two different cycles of length four with common edges.}

\end{figure}
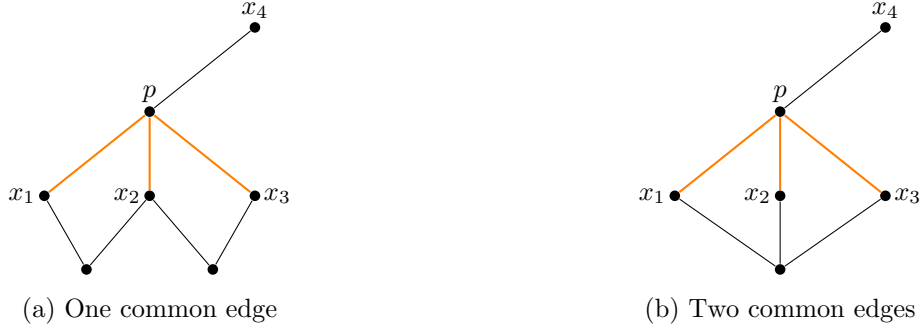

    \begin{claim}\label{claim: no two C4}
        $G$ does not contain two different cycles of length four that have common edges.
    \end{claim}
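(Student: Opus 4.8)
We have $G$ a minimal counterexample: connected, $n > 7$ vertices, $\Delta(G) \le 4$, no isolated vertices, and every odd induced subgraph has fewer than $2n/7$ vertices. By the earlier claims, $\delta(G) \ge 3$ (actually we've just shown $G$ is 4-regular), and $G$ is triangle-free. The recurring proof technique: delete a carefully chosen vertex set $S$ (a neighborhood or union of a few neighborhoods), remove the resulting isolated vertices, invoke minimality on what remains to get an odd induced subgraph $H$ of order $\ge 2(n - |S| - |I|)/7$, then "glue back" a small odd gadget on part of $S$ so that the combined induced subgraph is still odd and has order $\ge 2n/7$. For this to close we need $|S| + |I| + (\text{gadget deficit})$ to be small — essentially we need to find a gadget of $k$ vertices attached to $H$-side vertices that is odd, where $7 \cdot (\text{gadget size}) \ge 2 \cdot (\text{total vertices removed and not recovered})$.

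**The plan.** Suppose for contradiction $G$ contains two distinct 4-cycles sharing at least one edge. As the figure suggests, two cases arise: the two $C_4$'s share exactly one edge, forming a configuration on 6 vertices (a "theta-like" graph: two vertices of degree 3 joined by three internally-disjoint paths of lengths $1,2,2$ — wait, more precisely two $C_4$'s sharing an edge $uv$ give vertices $u, v$ plus two more on each cycle, call them $a_1, b_1$ and $a_2, b_2$ with $u$-$a_i$-$b_i$-$v$ paths), or they share two edges (necessarily two adjacent edges, since triangle-freeness rules out sharing a path of length 2 that would... actually sharing two edges of a 4-cycle means sharing a path of length 2, giving the second figure: a vertex $w$ adjacent to three vertices each adjacent to a common vertex $p$ — i.e. $K_{2,3}$). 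So the real content is: $G$ contains no $K_{2,3}$ and no theta-graph $\Theta_{1,2,2}$ as a subgraph. I would handle each case by choosing $W$ to be the union of the closed neighborhoods of the two degree-$\ge 3$ "hub" vertices of the configuration (in $K_{2,3}$, the two vertices of the part of size 2; in $\Theta_{1,2,2}$, the two degree-3 vertices $u,v$), then bounding $|W|$ and $|I|$ using 4-regularity and triangle-freeness, and finally exhibiting an odd gadget inside the configuration — e.g. in $K_{2,3}$ on vertices $\{p,w,a_1,a_2,a_3\}$, taking $G[\{p,w,a_1\}]$ is a path (order 3) but $G[\{w,a_1,a_2,a_3, p\}]$... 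I need $p$ and $w$ each to have degree 3 restricted to a chosen odd set; $\{p, a_1, a_2, w\}$ with edges $pa_1, pa_2, wa_1, wa_2$ is a 4-cycle (even), not useful; instead one wants the 3 vertices $p, w$ plus one $a_i$ giving a path $a_i p \ldots$ — no. The genuinely odd small induced subgraphs available are: a single edge (order 2), a $P_4$ or $C_5$ (order... $C_5$ not here), $K_{1,3}$ (order 4, but that requires the center to have exactly its 3 leaves as the set's neighbors).

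**Key steps, in order.** (1) Reduce to the two subcases $K_{2,3}$ and $\Theta_{1,2,2}$, justifying via triangle-freeness that no other way for two $C_4$'s to share an edge exists. (2) In each subcase, pick $W$ = union of neighborhoods (closed or open as convenient) of the two hubs; since $G$ is 4-regular and triangle-free, count $|W|$ precisely — for $K_{2,3}$ with hubs $p,w$: $N(p) \cup N(w)$ has the three common neighbors $a_1,a_2,a_3$ plus one private neighbor each, so roughly $|W| \le 7$ (or $8$ including... note $p,w \notin N(p)\cup N(w)$ since triangle-free means $p\not\sim w$); for $\Theta_{1,2,2}$ with adjacent hubs $u,v$: $N(u)\cup N(v)$ has size $\le 6$ plus we might add $u,v$. (3) Bound $|I|$, the isolated vertices of $G - W$, using that each vertex outside contributes degree 4 and a crude counting/edge-degree bound, exactly as in Observations 1.5–1.7 and the surrounding argument; the aim is $|W| + |I| \le 7\cdot(\text{gadget order})/2$ in integers. (4) Find the odd gadget: I expect to use the path $P_2$ (single edge) glued to $H$ after removing only $W \setminus \{$one special vertex$\}$, handling the "is that vertex in $H$?" dichotomy as done repeatedly above — i.e. take $W^* = (W \cup I) \setminus \{t\}$ for a well-chosen $t \in W$ that has a neighbor outside $W \cup I$, get $H$ of order $\ge 2(n-7)/7$ in $G - W^*$ when $|W^* \cup I| = 7$ or so, then either append edge $uv$ (if $t \notin V(H)$) or append an odd attachment through $t$ and the configuration (if $t \in V(H)$), so the recovered subgraph has order $\ge 2(n-7)/7 + 2 = 2n/7$. (5) Derive the contradiction.

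**Main obstacle.** The delicate part is step (3)–(4): controlling $|I|$ tightly enough and simultaneously guaranteeing that the special vertex $t$ (through which we re-attach) lies on the shared-edge configuration in such a way that appending a short odd gadget through $t$ reaches back into $V(H)$ — this requires $t$ to have a neighbor outside $W \cup I$, which may fail, forcing a further case split (as in Cases 1–3 of the triangle claim, where $W' = \{w \in W : N_G(w) \setminus (W \cup \{z\}) \ne \emptyset\}$ and one argues $W' \ne \emptyset$ and either $N_G(z) \cap W' \ne \emptyset$ or one finds a non-adjacent pair $w \in W'$, $u \in N_G(z)$). The new wrinkle compared to the triangle claim is that with two $C_4$'s the configuration has 6 vertices rather than a triangle's 3, so $|W \cup I|$ may be larger than $8$ and there may be *two* leftover isolated vertices to re-absorb, which is precisely the scenario where (as in Observation 1.6's final paragraph) one must delete $N_G(k)$ for an outside neighbor $k$ and glue a $7$-vertex path-like gadget. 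I expect the proof to mirror that deletion-of-$N_G(k)$ trick. One should double-check that $\Theta_{1,2,2}$ really can't be ruled out more cheaply — but since $G$ is triangle-free and $4$-regular, $\Theta_{1,2,2}$ forces the two degree-$3$ vertices $u,v$ to be adjacent and each to have $\le 2$ further neighbors, giving a tight local structure that the counting should kill.
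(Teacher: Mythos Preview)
Your proposal misses the paper's key shortcut. The paper's proof of this claim is essentially one sentence: if two $C_4$'s share an edge, choose $p$ to be a vertex lying in both cycles that has three neighbours $x_1,x_2,x_3$ among the cycle vertices (such a $p$ always exists, giving exactly the two configurations in the figure), and then reuse the \emph{same} setup $W = N_G(p)\cup N_G(x_1)\cup N_G(x_2)\cup N_G(x_3)$ that was already being analysed for an arbitrary $p$. The shared-cycle structure forces at least two coincidences among the sets $N_G(x_i)\setminus\{p\}$ (in the one-common-edge case, a vertex $y_3\in N(x_1)\cap N(x_2)$ and another $y_4\in N(x_2)\cap N(x_3)$; in the two-common-edge case, a single $y_3$ in all three), so $|W|\le 12$. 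But the analysis immediately preceding this claim already established $12\le |W|\le 14$ and fully disposed of the case $|W|=12$, $|I|=3$. Hence there is nothing further to prove.

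Your plan instead introduces a \emph{fresh} deletion set built from the neighbourhoods of the two ``hub'' vertices of the configuration and proposes to re-derive isolated-vertex bounds and odd gadgets from scratch. That route could in principle be pushed through, but you do not actually do so: your ``Main obstacle'' paragraph correctly identifies that controlling $|I|$ while guaranteeing a usable re-attachment vertex is delicate, and you leave it unresolved. (A minor side issue: your label $\Theta_{1,2,2}$ for the one-shared-edge configuration is wrong, since that theta graph contains triangles; the six-vertex configuration you describe is $\Theta_{1,3,3}$.) The genuinely missing idea is that this claim is not a standalone lemma requiring new machinery --- it is an immediate corollary of the $|W|=12$ analysis already completed, once $p$ is chosen inside the double-$C_4$ configuration rather than arbitrarily.
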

 
    \begin{proof}
        
 If $G$ contains two different cycles of length four that have common edges, we can let $p$ be the vertex in the two cycles of length four which is adjacent to three vertices in the two cycles, and let these three vertices be $x_1,x_2$ and $x_3$, and let the other neighbor of $p$ in $G$ be $x_4$. The claim follows by doing the same discussions all above. 
 \end{proof}

    If $|W|=13$, we have that $|I|= 2$, $3$ or $4$.

        If $|W|=13$ and $|I|=2$, let $N_G(x_1)=\{p,y_1,y_2,y_3\}$, $N_G(x_2)=\{p,y_3,y_4,y_5\}$, $N_G(x_3)=\{p,y_6,y_7,y_8\}$ and $I=\{z_1,z_2\}$. Note that $N_G(x_4) \setminus (W \cup I)\ne \emptyset$ by Claim \ref{claim: no two C4}. Then by Observation \ref{obv: 4-regular x4 1 in I and 1 in G'}, $x_4\in W_0$. If $y_3\in W_1 \cup W_2$, let $z_1\in N_G(y_3)\cap I$, and by Claim \ref{claim: no two C4}, $N_G(z_1)\subseteq \{y_3,y_6,y_7,y_8\}$ but $\{y_6,y_7,y_8\}\not\subseteq N_G(z_1)$, which contradicts to $d_G(z_1)=4$. Thus $y_3\in W_0$. Moreover, by Claim \ref{claim: no two C4}, every two $y_i's$ which are adjacent to the same $x_j$, cannot belong to $W_2$ at the same time, and $1\le |W_2|\le 3$. Without loss of generality, assume that $y_1\in W_2$ and $y_2\in W_1$ and $N_G(y_2)\cap I=\{z_1\}$. Let $K=N_G(y_1)\cup N_G(y_2)\setminus (W\cup I)$. Then $|K|\le 3$. Let $G^*=G-W-I-K$ and let $I^*$ be the set of isolated vertices in $G^*$. Similarly, if $|K|\le 2$, we have that $$|I^*|\le  \max \{\left \lfloor \frac{1}{4} (26+ 6-8-2) \right \rfloor, \left \lfloor \frac{1}{4} (26+ 3-8-1) \right \rfloor , \left \lfloor \frac{1}{4} (26 -8) \right \rfloor \}  =5.$$  If $|I^*|\le 4$, then the odd induced subgraph of order at least $2(|V(G^*)|-|I^*|)\slash 7 \ge 2(|V(G)|-21)\slash 7$ in $G^*-I^*$, together with the edge set $\{px_1,px_2, px_3, x_1y_1, x_1y_2\}$, is an odd induced subgraph of order at least $2|V(G)|\slash 7$. If $|I^*|=5$, then $|K|=2$ and one vertex in $K$, say $k$, has three neighbors in $I^*$, say $t_1,t_2$ and $t_3$. Then the odd induced subgraph of order at least $2(|V(G^*)|-|I^*|)\slash 7 \ge 2(|V(G)|-22)\slash 7$ in $G^*-I^*$, together with the edge set $\{px_1,px_2, px_3, kt_1, kt_2,kt_3\}$, is an odd induced subgraph of order at least $2|V(G)|\slash 7$.

     Therefore, $|K|=3$ and $|I^*|\le \left \lfloor \frac{1}{4} (26+ 9-8-3) \right \rfloor =6$. If $|I^*|\le 3$, the odd induced subgraph of order at least $2(|V(G^*)|-|I^*|)\slash 7 \ge 2(|V(G)|-21)\slash 7$ in $G^*-I^*$, together with the edge set $\{px_1,px_2, px_3, x_1y_1, x_1y_2\}$, is an odd induced subgraph of order at least $2|V(G)|\slash 7$. If $4\le |I^*|\le 6$, then some vertex in $K$, say $k$, has two neighbors in $I^*$, say $t_1$ and $t_2$. Let $G_1=G-W-I-K-I^*- N_G(k)$. Then the graph obtained from $G_1$ by deleting all isolated vertices, contains an odd induced subgraph $H_1$ of order at least $2(|V(G)|-13-2-3-6-1-3)\slash 7 \ge 2(|V(G)|-28)\slash 7$. If $k$ is adjacent to $y_1$, then $k$ is not adjacent to $y_2$ and $H_1$ together with the edge set $\{px_3, y_1z_1, y_1z_2, y_1k, kt_1, kt_2\}$ is an odd induced subgraph of order at least $2|V(G)|\slash 7$.  If $k$ is adjacent to $y_2$, then $k$ is not adjacent to $y_1$ and $H_1$ together with the edge set $\{px_3, y_1z_2, y_2k, kt_1, kt_2\}$ is an odd induced subgraph of order at least $2|V(G)|\slash 7$.

        If $|W|=13$ and $|I|=3$, let $N_G(x_1)=\{p,y_1,y_2,y_3\}$, $N_G(x_2)=\{p,y_3,y_4,y_5\}$ and $N_G(x_3)=\{p,y_6,y_7,y_8\}$. Note that $|W_2|\ge 3$, and by Claim \ref{claim: no two C4}, if $y_3\in W_2$, we have $\{y_1,y_2,y_4,y_5\}\cap W_2=\emptyset$. Without loss of generality, assume that $y_1\in W_2$ and $y_2\in W_1\cup W_2$. 
  
      If $y_2\in W_2$, then $|(N_G(y_1)\cap I)\cap  (N_G(y_2)\cap I)|=1$ by Claim \ref{claim: no two C4}. Let $K=N_G(y_1)\cup N_G(y_2)\setminus (W\cup I)$. Then $|K|\le 2$. Let $G^*=G-W-I-K$ and let $I^*$ be the set of isolated vertices in $G^*$. Similarly, we have that 
     $$|I^*|\le  \max \{\left \lfloor \frac{1}{4} (26+ 6-12-2) \right \rfloor, \left \lfloor \frac{1}{4} (26+ 3-12-1) \right \rfloor , \left \lfloor \frac{1}{4} (26 -12) \right \rfloor \}  =4.$$ If $|I^*|\le 3$, then the odd induced subgraph of order at least $2(|V(G^*)|-|I^*|)\slash 7 \ge 2(|V(G)|-21)\slash 7$ in $G^*-I^*$, together with the edge set $\{px_1,px_2, px_3, x_1y_1, x_1y_2\}$, is an odd induced subgraph of order at least $2|V(G)|\slash 7$. Therefore, $|I^*|=4$. If one vertex in $K$, say $k$, has three neighbors in $I^*$, say $t_1,t_2$ and $t_3$, then the odd induced subgraph of order at least $2(|V(G^*)|-|I^*|)\slash 7 \ge 2(|V(G)|-22)\slash 7$ in $G^*-I^*$, together with the edge set $\{px_1,px_2, px_3, kt_1, kt_2,kt_3\}$, is an odd induced subgraph of order at least $2|V(G)|\slash 7$. If every vertex in $K$ has at most two neighbors in $I^*$, then $|K|=2$. Note that 
     \begin{align*}
         \sum_{v\in W\setminus \{p,x_1,x_2,x_3\}}|N_G(v)\cap V(G')| &\ge \sum_{t\in I^*}|N_G(t)\cap W|+ {\textstyle \sum_{z\in I}|N_G(z)\cap W|} +|K| \\
         &\ge  16-4+12 +2\\
         &= 26 \\
         &\ge \sum_{v\in W\setminus \{p,x_1,x_2,x_3\}}|N_G(v)\cap V(G')|. 
     \end{align*} 
      Then $\bigcup_{v\in W\setminus \{p,x_1,x_2,x_3\}}N_G(v)\subseteq \{p,x_1,x_2,x_3\}\cup I \cup K \cup I^*$, which means that $y_7,y_8$ have no neighbors in $G^*-I^*$ and $y_1,y_2$ are not adjacent to $y_7,y_8$. Now the odd induced subgraph of order at least $2(|V(G^*)|-|I^*|)\slash 7 \ge 2(|V(G)|-22)\slash 7$ in $G^*-I^*$, together with the edge set $\{px_1,px_2, px_3, x_1y_1, x_1y_2,x_3y_7,x_3y_8\}$, is an odd induced subgraph of order at least $2|V(G)|\slash 7$.
     
     Therefore, $y_2\in W_1$, and we may assume that $\{y_4,y_5\}\not\subset W_2$, $\{y_6,y_7\}\not\subset W_2$, $\{y_7,y_8\}\not\subset W_2$ and $\{y_6,y_8\}\not\subset W_2$ otherwise we can do the same discussion above. Recall that $|W_2|\ge 3$ and if $y_3\in W_2$, then $\{y_1,y_2,y_4,y_5\}\cap W_2=\emptyset$. So $|W_2|=3$ and $|W_1|=6$. Without loss of generality, let $y_6\in W_2$ and $y_7,y_8\in W_1$. Let $N_G(y_6)\cap I=\{z_1,z_2\}$. By Claim \ref{claim: no two C4}, $N_G(y_7)\cap \{z_1,z_2\}=\emptyset$ or $N_G(y_8)\cap \{z_1,z_2\}=\emptyset$. Without loss of generality, let $N_G(y_7)\cap \{z_1,z_2\}=\emptyset$. Then $(N_G(y_6)\cap I)\cup (N_G(y_7)\cap I)=I$. Let $G_2^*=G-W-N_G(y_6)-N_G(y_7)$. Then the graph obtained from $G_2^*$ by deleting all isolated vertices, contains an odd induced subgraph $H_2^*$ of order at least $2(|V(G)|-13-3-3-9)\slash 7 \ge 2(|V(G)|-28)\slash 7$. $H_2^*$ together with the edge set $\{p x_1, p x_2,p x_3, x_3 y_6, x_3 y_7, y_6z_1, y_6z_2 \}$ is an odd induced subgraph of order at least $2|V(G)|\slash 7$.

        If $|W|=13$ and $|I|=4$, then $|W_2|\ge 7$. But by Claim \ref{claim: no two C4}, we have that $|W_2|\le 6$, which is a contradiction.

\begin{figure}[htbp]
 \centering

\begin{tikzpicture}[
    scale=1.4,
    vertex/.style={circle, fill=black, inner sep=1pt, minimum size=4pt, draw=none},
    every label/.style={font=\small, inner sep=1pt}
]

\node[vertex, label=left:$p$] (v) at (0,0) {};
\node[vertex, label=left:$x_3$] (x3) at (0.8,0.8) {};
\node[vertex, label=left:$x_4$] (x4) at (0,1) {};
\node[vertex, label=left:$x_1$] (x1) at (-0.6,-1) {};
\node[vertex, label=left:$x_2$] (x2) at (0.6,-1) {};
\node[vertex, label=left:$y_3$] (y3) at (0,-2) {};

\draw[orange, thick] (v)--(x1);
\draw[orange, thick] (v)--(x2);
\draw[orange, thick] (v)--(x3);

\draw (v)--(x4);
\draw (x1)--(y3);
\draw (x2)--(y3);


\end{tikzpicture}

\caption{ A cycle of length four.}
    \end{figure}
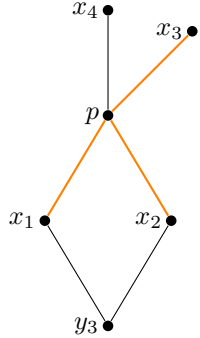
    \begin{claim}\label{claim: no C4}
        $G$ contains no cycles of length four.
    \end{claim}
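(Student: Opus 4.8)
The plan is to prove the claim by contradiction, reducing everything to the case analysis already carried out above for $|W|\in\{12,13\}$. Suppose $G$ contains a cycle of length four, say $C=p\,x_1\,q\,x_2\,p$ with $q\in N_G(x_1)\cap N_G(x_2)$. I would run the generic argument above with this particular vertex $p$, choosing the labelling of $N_G(p)=\{x_1,x_2,x_3,x_4\}$ so that $x_1$ and $x_2$ are the two neighbours of $p$ lying on $C$; as before, set $W=N_G(p)\cup N_G(x_1)\cup N_G(x_2)\cup N_G(x_3)$ and let $I$ be the set of isolated vertices of $G-W$.

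The key point is that $|W|\le 13$. Since $G$ is triangle-free, no vertex of $N_G(x_i)\setminus\{p\}$ (for $i\in\{1,2,3\}$) can be one of $x_1,x_2,x_3,x_4$, for such a vertex would be a common neighbour of $p$ and $x_i$. Hence $W$ is the disjoint union of $\{p,x_1,x_2,x_3,x_4\}$ and $(N_G(x_1)\cup N_G(x_2)\cup N_G(x_3))\setminus\{p\}$; as each $N_G(x_i)\setminus\{p\}$ has three elements and both $N_G(x_1)\setminus\{p\}$ and $N_G(x_2)\setminus\{p\}$ contain $q$, the latter union has at most $8$ elements, so $|W|\le 13$. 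Combined with the bounds $12\le|W|\le 14$ already established above, this forces $|W|\in\{12,13\}$.

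Both possibilities have already been eliminated: the case $|W|=12$ (which forces $|I|=3$) and each of the cases $|W|=13$ with $|I|\in\{2,3,4\}$ were shown above to produce an odd induced subgraph of order at least $2n/7$, a contradiction. In the $|W|=13$ analysis one may assume, after relabelling $x_1,x_2,x_3$, that the unique coincidence among the outer neighbourhoods is the common neighbour of $x_1$ and $x_2$ — exactly the configuration forced here by $C$ — so that analysis does apply. This contradiction completes the proof.

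The only delicate point, and hence the main obstacle (a mild one), is making the reduction airtight: one must check that a four-cycle in $G$ genuinely places us inside a previously treated configuration — in particular that the relabelling bringing the common neighbour onto the pair $x_1,x_2$ is legitimate, and that $q$ is a genuine new vertex rather than coinciding with some $x_j$ or with $p$. Triangle-freeness together with the freedom to order the neighbours of $p$ takes care of all of this; beyond that no new estimates are needed, since the computational work was already done in the $|W|=12$ and $|W|=13$ cases.
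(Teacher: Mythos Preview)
Your proposal is correct and matches the paper's own proof essentially verbatim: the paper likewise chooses $p$ on the putative $4$-cycle with $x_1,x_2$ its two cycle-neighbours, so that the common neighbour $q$ forces $|W|\le 13$, and then simply invokes ``the same discussions all above'' (i.e.\ the already-completed $|W|=12$ and $|W|=13$ analyses) to reach a contradiction. Your added remarks about why $q\notin\{p,x_1,\dots,x_4\}$ (triangle-freeness) and why the labelling aligns with the $|W|=13$ setup are sound and make explicit what the paper leaves implicit.
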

 \begin{proof}

    If $G$ contains a cycle of length four, we can let $p$ be the vertex in this cycle which is adjacent to two vertices in this cycle, and let these two vertices be $x_1,x_2$, and let the other neighbors of $p$ in $G$ be $x_3,x_4$.  The claim follows by doing the same discussions all above. 
 \end{proof}

    By Observation \ref{obv: 4-reg and 2 I of x4}, all the above discussions and Claim \ref{claim: no C4}, we have $|W|=14$ and $|I|=1$. Moreover, let $I=\{z\}$. Then $z$ is adjacent to $x_4$ and Observation \ref{obv: 4-regular x4 1 in I and 1 in G'} implies that  $N_G(x_4)\subseteq W\cup I $, but at this moment $G$ will contain a cycle of length four since $d_G(x_4)=4$, which is a contradiction.

  \begin{figure}[htbp]
 \centering   
\begin{tikzpicture}[scale=1.05,
  every node/.style={circle, fill=black, inner sep=1.5pt},
  edge/.style={line width=0.6pt}]
  \def\n{7}              
  \def\r{2.2cm}          

  \foreach \i in {1,...,\n} {
    \pgfmathsetmacro\ang{90 - 360*(\i-1)/\n}
    \node[label={[label distance=1.6mm]\ang:$v_{\i}$}] (v\i) at (\ang:\r) {};
  }

  \foreach \i in {1,...,6}{
    \pgfmathtruncatemacro\nexti{mod(\i,\n)+1} 
    \foreach \j in {\the\numexpr\i+1\relax,...,7}{
      \ifnum\j=\nexti\relax
      \else
        \ifnum\i=1
          \ifnum\j=\n\relax
          \else
            \draw[edge] (v\i) -- (v\j);
          \fi
        \else
          \draw[edge] (v\i) -- (v\j);
        \fi
      \fi
    }
  }

\end{tikzpicture}
\caption{$K_7$ without a Hamilton cycle}\label{fig6}
\end{figure}
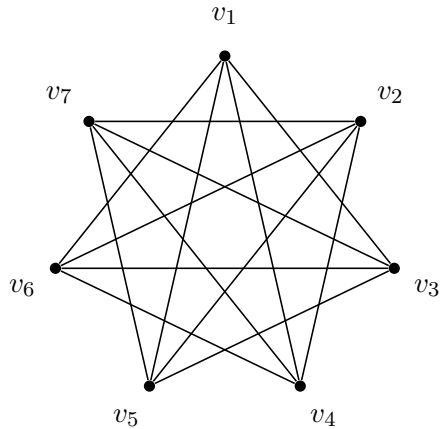

    To illustrate the tightness of the coefficient, we consider the following example:  Let $G$ be a graph obtained by deleting a Hamiltonian cycle from a $K_7$, see Fig. \ref{fig6}. We claim that
the maximum order of an odd induced subgraph, say $H$, of $G$ is exactly $2$ (any edge is an odd induced subgraph). Note that $|V(H)|$ is even by the hand-shaking lemma. If $|V(H)|=4$, there is a vertex of degree $2$ in $H$, a contradiction. If $|V(H)|=6$, there is a vertex of degree $4$ in $H$, a contradiction. So, $H$ is an edge.

\section{Conclusion}\label{sec:conc}
As we only consider the graphs without isolated vertices, so we omit saying this in the following.
In this paper, we determined the optimal constant $c$ such that every $n$-vertex graph
with maximum degree at most four contains an odd induced
subgraph of order at least $c n$.  We proved that $c = \tfrac{2}{7}$ and this value cannot be improved. Together with the
result of Berman, Wang and Wargo~\cite{BWW1997}, which established the sharp bound
$c = \tfrac{2}{5}$ for graphs of maximum degree at most three, our result completes the exact determination of the largest possible induced odd subgraph in graphs of maximum degree at most four. What is the largest value of $c$ for graphs with maximum degree five? If it is no less than $\tfrac{2}{7}$ then it would further support the possibility that $\tfrac{2}{7}$ is the optimal constant for all graphs. 

Our work fits into a broader line of research investigating the existence of large odd
induced subgraphs under structural restrictions. In particular, it complements the recent
positive resolution by Ferber and Krivelevich~\cite{FK} of a conjecture that there is a constant $c>0$ that every graph contains an odd induced subgraph of order at least $cn$. Ferber and Krivelevich's proof shows that $c\ge 10^{-4}.$
Our work parallels several optimal results known for classes such as trees~\cite{Scott1995},
graphs of treewidth at most two~\cite{hou2018}, and planar graphs with large
girth~\cite{Rao2022}. It remains a compelling direction to understand to what
extent similar sharp bounds can be obtained for other sparse families of graphs, including
graphs of bounded average degree or bounded degeneracy.

Another natural direction is related to the conjecture of Scott~\cite{Scott1992}, which states
that every $n$-vertex graph with chromatic number $\chi$ contains
an odd induced subgraph of order at least $n/(2\chi)$. This conjecture has recently been
disproved for bipartite graphs but remains open for graphs with $\chi \ge 3$; see
Wang and Wu~\cite{T2024}. It would be interesting to investigate whether the extremal
constructions arising in our proof shed further light on this problem.

\medskip

\subsection*{Acknowledgement}
JA is partially supported by the National Natural Science Foundation of China (No.12401456, No.12522117) and the Natural Science Foundation of Tianjin (No.24JCQNJC01960). QG is partially supported by China Scholarship Council (No.202406200159). YH is partially supported by China Scholarship Council (No.202406200160).

\end{document}